\documentclass[12pt]{amsart}
\usepackage{extsizes}

\usepackage{amssymb,amsfonts,amsthm,amsmath}
\usepackage{graphicx}
\usepackage{color}
\usepackage[mathscr]{eucal} 
\usepackage{subcaption}
\usepackage{mathtools}
\usepackage[pagebackref=true]{hyperref}
\usepackage{hyperref}
    \hypersetup{
          unicode   = true,
          colorlinks= true,
    }
\usepackage{cite}

\usepackage{hyperref}
\usepackage{cleveref}
\crefformat{section}{\S#2#1#3} 
\crefformat{subsection}{\S#2#1#3}
\crefformat{subsubsection}{\S#2#1#3}

\AtBeginDocument{
\addtolength\abovedisplayskip{-0.1\baselineskip}
 \addtolength\belowdisplayskip{-0.1\baselineskip}
 \addtolength\abovedisplayshortskip{-0.1\baselineskip}
  \addtolength\belowdisplayshortskip{-0.1\baselineskip}
}
\usepackage[left=1 in,top=1 in,right=1 in, bottom=1 in]{geometry}
\usepackage[T1]{fontenc} 
\numberwithin{equation}{section}
\linespread{1.5}

\definecolor{darkred}{rgb}{.70,.12,.20}

\definecolor{darkgreen}{rgb}{.20,.52,.14}

\definecolor{byz}{rgb}{.44,.16,.39}

\numberwithin{equation}{section}
\setlength{\marginparwidth}{2cm}
\setcounter{equation}{0}
\setlength\parindent{0pt} 

\DeclareMathOperator{\supp}{supp}
\newtheorem{lemma}{Lemma}
\newtheorem{remark}{Remark}
\newtheorem{definition}{Definition}
\newtheorem{corollary}{Corollary}
\newtheorem{assumption}{Assumption}
\newtheorem{theorem}{Theorem}

\newtheorem{axiom}{Axiom}
\newtheorem{proposition}{Proposition}
\newtheorem*{Axiom}{Axiom of Mass Conservation}
\newtheorem{corollary }{Corollary}
\newtheorem{Lad-Ur}{Ladyzhenskaya-Uraltceva iterative Lemma}
\newtheorem{hypotheses}{Hypotheses}
\newtheorem{exmpl}{Example}
\usepackage{todonotes}


\usepackage{enumitem}

\title[Iterative Energy Estimate - Degenerate Einstein Brownian model]{An Iterative Energy Estimate for Degenerate Einstein model  of Brownian motion }
\author{
Isanka Garli Hevage$^1$, Akif Ibraguimov$^2$, Zeev Sobol$^{3}$
}
\date{}
 \setlength{\parskip}{0pt}

\begin{document}
\maketitle
\begin{center}
{$^1$ Department of Mathematics and Statistics, Texas Tech University\\
Lubbock, Texas, USA, e-mail: \texttt{isankaupul.garlihevage@ttu.edu}
\smallskip
\\
{$^2$ Department of Mathematics and Statistics, Texas Tech University,}
\\
\small{Lubbock, Texas, USA, e-mail: \texttt{akif.ibraguimov@ttu.edu}}
\smallskip
\\
{$^3$ Department of Mathematics, University of Swansea,}
\\
{Fabian Way, Swansea SA1 8EN, UK, e-mail: \texttt{z.sobol@swansea.ac.uk}}
}
\end{center}
\begin{abstract}
\noindent
We consider the degenerate Einstein’s Brownian motion model when the time
interval $\tau$ of free jumps (particle-jumps before the collisions), reciprocals to the
number of particles per unit volume $u(x, t) \geq 0$, at the point of observation $x$
at time $t$. The parameter $\tau \in (0, C]$, which controls the characteristics of
the fluid,  ”almost decreases”, with respect to $u$, and converges to $\infty$  as $u \rightarrow 0$. This degeneration leads to the localization of the particle-distribution in the media.  In the paper, we present a structural condition of the time interval and the frequency of these \textit{free jump}s as  functions of $u$ which guarantees the finite speed of propagation of $u$. 
\end{abstract}
\section{Introduction}
Consider the thought experiment of the fluid which occupy a bounded domain $\Omega\subset \mathbb{R}^{N}$  by the particles for $ N=1,2,\dots$. Let $\vec{\Delta} \triangleq <\Delta_{1},\Delta_{2}, \cdots,\Delta_{N}>$ be the $N$ dimensional vector of \textit{\textit{free jump} }, which we  define  as the particles' jumps without collisions. In definition of \textit{\textit{free jump}} we follow the classical Einstein paradigm in his famous thesis \cite{Einstein56}, where in  literature the term "free pass" is used instead (see \cite{vin-krug}). From this point of view events of \textit{free jump} and "free pass" are synonyms. \\
Einstein assumed that process of random motion of the particles is characterized by events of \textit{free jumps}. He proposed two main parameters which characterize \textit{free jumps} : the time interval $\tau$ within which the \textit{free jump}s occur, and frequency $\varphi(\Delta)$  of the occurrences of \textit{free jump}s of the length $\Delta$. Einstein assumed that diffusion: $ \displaystyle {\left[\int_{-\infty}^{\infty} {\Delta}^2 \varphi(\Delta)\ d \Delta\right] \bigg/ \tau } $ is to be constants, which allowed him to reduce his thought experiment to the classical heat equation with constant coefficients, where that exhibits the so-called effect of infinite speed of the perturbation, namely, \textit{if $u(x_0,t_0)$ be the concentration of particles at some moment of time $t_0$ and at any point $x_0$, is positive  , then $u(x,t)>0$  for all $x$ and $t \geq t_{0}$}. This property of $u(x,t)$ is very unrealistic. The question which we asked in this article is as follows: Can Einstein's paradigm of \textit{free jump}s be generalised in such way that one sees finite speed of propagation of the particles, i.e., \textit{if $u(x,0)=0$ at some point $x_0$ then $u(x_0,t)=0$   during time interval $[0,T]$ for some $T>0$} ?\\ 
In this article, we will present that if time interval of \textit{free jump}s $\tau$ is inverse proportional to the concentration $u$, and/or $u$ is proportional to  the variance $\displaystyle \sigma^{2} \triangleq \left[\int_{-\infty}^{\infty} {\Delta}^2 \varphi(\Delta)\ d \Delta\right]$, then under some assumptions of the proportionality, our solution of degenerate Einstein equation in IBVP\eqref{ibvp} will exhibit the finite speed of propagation, which is closely relate to so-called Barenblatt solutions for degenerate porous medium equation (see \cite{Barenblatt-96}).
The origin of the porous medium equation is differ from thought experiment of the Einstein paradigm. Namely, the first continuity equation hypothesised in the form:
\begin{equation}\label{cont-flux-eq}
L(\rho,\vec{J} )\triangleq\rho_t+\nabla\cdot\left(\rho \vec{J}\right)=0,
\end{equation}
where  density $\rho \in \mathbb{R}$, flux $\vec{J} \in \mathbb{R}^N$ in porous media that equals to the velocity $\vec v$ of the flow  (see \cite{Aronson-1}). It is assumed that $\rho$ is function of the pressure $p$ and can be approximated based on thermodynamic experiment. For example, $\rho = p^{\lambda}$ for some gasses by thermodynamic laws and  $ \displaystyle \vec{v}=-\frac{k}{\mu}\nabla p$, where $k$ - permeability of porous media and $\mu$ is viscosity subject to the experimental Darcy equation (see \cite{Muskat}). Combining these experimental relations in \eqref{cont-flux-eq} one can get the following divergent equation for scalar pressure function:
\begin{align}\label{cont-p-eq}
 L (w)=
 (w)_t - \frac{k}{\mu} \ \frac{1}{\lambda+1} \ \Delta \left(w^{ \frac{\lambda+1}{\lambda}}\right) = 0,
\end{align}
where $w = p^\lambda$ and $ \lambda > 0 $. In the pioneering work by G. I. Barenblat,  A.S. Kompaneetz and Ya.B. Zeldovich (see \cite{Evan}), it was shown that under specific initial and boundary conditions there exists a self-similar solution of the equation \eqref{cont-p-eq}, which exhibits finite speed  of propagation. Evidently, the Einstein operator $L_{E}$ in \eqref{M-1} is in nondivergent form that is based on the thought experiment, which allows to interpret the results on observable data in term of the length of \textit{free jump}s, and then adjust parameters $\tau$ and $\varphi$ of the model to execute further analysis.
In this paper, we will follows Einstein approach but will use technique for divergent equation.\\
The article is organized as follows. In \cref{Gen-Ein-Para}, we consider generalization of classical Einstein model  of Brownian motion to $\mathbb{R}^{N}$,  when the major parameters of the system, i.e., time interval $\tau$ of the \textit{free jumps} and the frequency $\varphi$ of the \textit{free jumps} depend on the concentration of the number of particle per unite volume $u$. We use the generic mass conservation law with absorption-reaction term  and basic stochastic principles to derive a partial differential inequality (PDI) under the Einstein's axioms in  \cref{Derivation PDI}. By introducing local forces and nonlocality processes, we conclude the deterministic PDI  \ref{M-1} which governs the dynamics of the generalized Einstein paradigm. Using Hypothesis \ref{tau} in  \cref{Nonlin-IBVP}, we define $u$ as the weakly approximated solution  to the nonlinear initial boundary value problem \ref{ibvp} such that holds the limit in \eqref{eps_sol}. We Introduce the assumption on functions $H,F$ and $G$ in  \cref{Assumptions} to explicitly structures coefficients as in the functions $H$ in \eqref{H-choice} and $F$ in \eqref{F-result} in a way that $u$ conserves the finite speed of propagation in  \cref{localization}. In \cref{iterative ineq}, we establish collaborating  Lemmas and introduce Ladyzhenskaya Iterative scheme which apply in  \cref{localization} to prove the localization property of $u$ based on De-Georgi's construction, if the initial energy functional $Y_{0}[T]$ preserves certain boundednes with respect to some $T' > 0$.  \cref{Exmp-P-F} is focused on models of the functions that attest all constrains on the functions $ F, G$ and $H$, which consequently guarantees the localization theorem \ref{localization}. We present auxiliary results of functional spaces in  \cref{WAPS} to prove the uniform boundednesses regarding the classical solution $u^{\epsilon}$  which ensures its limit as in \eqref{eps_sol}.

 
\section {Generalized Einstein paradigm}\label{Gen-Ein-Para} 
Let $u(x, t)$ be the 
function which represent  the number of particles per unit volume, suspended in the medium of interest at point $x \in \mathbb{R}^N$ and at time $t > 0 $ . Denote $\mathbb{J}(\tau)$ to be the set of vectors with noncolliding jumps corresponding to the time interval $\tau$. We call  $\vec{\Delta} \in \mathbb{R}^N $ to be a \textit{vector of free jump of particles} if $\vec{\Delta} \in  \mathbb{J}(\tau)$.
Assume the following extension of the axioms in classical Einstein Brownian motion.
\begin{assumption}   \normalfont
\begin{enumerate} 
\item
\label{constitive} Time interval of \textit{free jump}s $\tau$, expected vector $\vec{\Delta}_e$  of  a \textit{free jump} $\vec{\Delta}$ and probability density function of \textit{free jump} $\varphi(\vec{\Delta})$  are the only parameters  which characterize process of \textit{free jump}s. Note that in a view of the definition of the set $\mathbb{J}(\tau)$, if $\vec{\Delta} \notin \mathbb{J}(\tau)$ then  $\varphi(\vec{\Delta})=0$.
\item
 The key parameters $\tau$ and $\varphi$ can depend on the concentration of the particles and also the space-time coordinate.
\end{enumerate}
\end{assumption}
\begin{axiom}  \normalfont {Whole universe axiom:}
 \begin{align}\label{uni-ax}
      \int_{\mathbb{J}(\tau)}\varphi(\vec{\Delta})d\vec{\Delta} = 1.
 \end{align}
\end{axiom}

\begin{assumption}  \normalfont {Symmetry of \textit{free jump}s}:
\begin{equation}\label{symmetry}
\varphi(\vec{\Delta}) = \varphi(-\vec{\Delta}).    
\end{equation}
Existence of second moments:
\begin{equation}
    \int_{\mathbb{P(\tau)}}|\vec{\Delta}|^2\varphi(\vec{\Delta})d\vec{\Delta}<\infty.
\end{equation}
\end{assumption}

Note that it follows from the preceding assumption that the expectation of $\vec{\Delta}$ equals zero, and there exists a covariance matrix $[\sigma_{ij}^{2}]$ of \textit{\textit{free jump}s}, where
\begin{align}
\sigma_{ij}^2 \triangleq  \int_{\mathbb{J}(\tau)}{\Delta_{i}}{\Delta_{j}} \varphi(\vec{\Delta})d\vec{\Delta}, \quad  \text{for }  i,j = 1,2,\dots, N.
\label{var-ij}
\end{align}
Evidently  $\sigma_{ij}^2(x,t)$ depend on space $x$ and time $t$. We postulate generalized Einstein's axiom for the number of particles found  at point $x$ at time $t+\tau$ a, in the control volume $dv$ by:
\begin{Axiom}\begin{equation}
u(x, t+\tau) \cdot  dv =  
\int_{\mathbb{J}(\tau)} u(x+ \vec{\Delta}, t)\varphi(\vec{\Delta}) d \vec{\Delta} \cdot dv
+
\tau\cdot\int_{\mathbb{J}(\tau)} M(x+ \vec{\Delta}, t)\varphi(\vec{\Delta}) d \vec{\Delta} \cdot
dv.\label{Einstein_conserv_eq}
\end{equation}
\end{Axiom}

The axiom intuitively  expressed that at any given point in space $x$ at time $t+\tau$, we will observe density of all particles with \textit{free jump}s from the point $x$ at time $t$,  "$+$ density" of  particles which "produced" and "$-$ density" of  particles which "consumed" during time interval $[t,t+\tau].$  For comparison, see the first formula with integral on page 14  in \cite{Einstein56} . We modeled \eqref{Einstein_conserv_eq} by adding the term $ M(\cdot)$ which reports the process of  rate of absorption-consumption during the time interval $[t,t+\tau]$ due to particles interaction  along of \textit{free jumps}. We  consider the scenario, when the consumption dominates over the production by setting $ M(\cdot) \leq 0$.
\begin{remark}
Einstein definition of density of particles in {\normalfont\cite{Einstein56}}  differs from the fundamental definition of the density of fluid, rather means the concentration of the volume of interest.
\end{remark}
\section{Derivation of Partial differential Inequality} \label{Derivation PDI}
In this section, we derive partial differential inequality whose solution exhibits the feature of finite speed of propagation.  
Let $\zeta = (\zeta_1,\zeta_2,\cdots,\zeta_N) \in$ be a multi-index and $ x^{\zeta} \triangleq x_{1}^{\zeta_1} \cdot x_{2}^{\zeta_2} \cdot \dots \cdot x_{N}^{\zeta_N} $ for $x, \zeta \in \mathbb{R}^{N}$. Assume that $u(x,t)\in C_{x,t}^{2,1} $. By Taylor's Expansion  \cite{kon} and using \eqref{uni-ax} - \eqref{var-ij}, we get 
\begin{align}
\int_{\mathbb{J}(\tau)}
u(x +\vec{\Delta},t)  \varphi(\vec{\Delta})d\vec{\Delta}
 \text{ = } 
\text{ } u(x,t) 
 \text{ + }
\frac{1}{2}\sum_{i,j=1} \sigma_{ij}^{2}u_{x_{i}x_{i}}{(x,t)}
\text{ + }
R_{\zeta}\label{Taylor-eq},
\end{align} 
where 
\begin{align}
R_{\zeta} \triangleq {\int_{\mathbb{J}(\tau) }
 \sum_{|\zeta| = 2},
H_{\zeta}(x,\vec{\Delta},t)(\vec{\Delta})^{\zeta}
\varphi(\vec{\Delta})d\vec{\Delta}} \label{R-term}
\end{align}
with locally bounded function $H_{\zeta}$ such that
$\lim_{\vec{\Delta}\rightarrow 0} H_{\zeta}(x ,\vec{\Delta},t) = 0.
$ 
Using \eqref{Taylor-eq} in $\eqref{Einstein_conserv_eq}$ yields
\begin{equation}
u(x , t + \tau) - u(x,t) = 
 \frac{1}{2}\sum_{i,j=1}^N  \sigma_{ij}{u}_{x_{i} x_{j}} {(x,t)} + R_{\zeta} 
 + 
\tau\int_{\mathbb{J}(\tau)} M(x+ \vec{\Delta}, t)\varphi(\vec{\Delta}) d \vec{\Delta} .
 \label{post-Taylor}
\end{equation}
Observe that \eqref{post-Taylor} is defined at different points in space and time . We eliminate this ambiguity and derive the equation at the same point, by using Carathéodory's differential criterion:     there exists a function $\psi^{t}$ such that 
\begin{align}\label{Caratheodory-1}\small
& u(x, t+ \tau)  - u (x, t) =
\tau \psi^{t} (x,t,\tau), 
\end{align}
where $  \lim_ {s \rightarrow 0} \psi^{t}(x,t,s) =u_{t}(x,t)$. Using  \eqref{Caratheodory-1} in \eqref{post-Taylor} we get 
\begin{equation}
\tau u_{t}(x,t) =
 \frac{1}{2}\sum_{i,j=1}^N  \sigma_{ij}^2{u}_{x_{i} x_{j}} {(x,t)} + R_{\zeta}
 + 
\tau\int_{\mathbb{J}(\tau)} M(x+ \vec{\Delta}, t)\varphi(\vec{\Delta}) d \vec{\Delta} .
 \label{post-Taylor-2}
\end{equation}
For the forthcoming iterative procedure, we assume that
\begin{assumption}
 \begin{align}
 R_{\zeta}+ \tau \int_{\mathbb{J}(\tau)} M(x+ \vec{\Delta}, t)\varphi(\vec{\Delta}) d \vec{\Delta}  \leq 0 . \label{Negative-pheno}
 \end{align}
\end{assumption}
{First term $R_{\zeta}$  is responsible for nonlocality of the process . $R^1$ reflects concentration jumps in time, for instance, due to birth and death in biological system.  The last term in \eqref{Negative-pheno} can be interpreted as a local force. For example if $M$  noise during time of observation and therefore it can be stochastic.}
Using   \eqref{Negative-pheno} in \eqref{post-Taylor-2} and approximating the first order terms, it follows that the function $u(x,t)$ can be estimated by the following partial differential inequality:
\begin{align}
   L_{E} u =
   \tau{u_t}-
    \sum_{i,j=1}^N &  \sigma_{ij}^2 u_{x_{i}x_{j}} \leq 0.
\label{M-1} 
\end{align}
Observe that the partial differential inequality \eqref{M-1} is deterministic, while the stochastic nature is modeled by functions  $\tau$ and $\varphi$ (the latter appears in \eqref{M-1} in the form of the covariance $\sigma_{ij}^2 $ in \eqref{var-ij}), which  are key characteristics of the process dynamics. In general, they can be functions of spatial $x$ and time  $t$ variables, concentration $u$ and its gradient $\nabla u$. In this report, we present $\tau$ and $\sigma_{ij}^2$ satisfy the following Hypothesis in addition to \eqref{uni-ax} and \eqref{symmetry}. 
\begin{hypotheses}\label{tau} 
Let $ P\in C[0,\infty)$ be a function such that 
\begin{equation}\label{def-P-compos}
    c_1P(u)|\xi|^2 \leq \sum\limits_{ij=1}^N \frac1\tau \sigma_{ij}^2\xi_i\xi_j\leq c_2P(u)|\xi|^2, \  \xi\in\mathbb{R}^N,
\end{equation}
for some $0<c_1<c_2$. Here $P(0) = 0$ and $ 0 < P(s) < c_3< \infty$ when $s>0$.
\end{hypotheses}
\begin{remark}
In \eqref{def-P-compos} $P(u)$, is a composite parameter of stochastic processes of \textit{free jumps}, which characterizes the relative covariance  with respect to the time interval of free jumps. In  our scenario, $P(u)$ degenerate at $u=0$ which mean that the growth due to dispersion is slower than the time interval of dispersion, as the concentration vanishes.  
\end{remark}
\section{Nonlinear Degenerate IBVP} \label{Nonlin-IBVP}
For the sake of simplicity, we analyse the case when the covariance matrix is diagonal and positively defined. For $i,j=1,\dots,N$, let  $\sigma_{ij}^2 = [\sigma^2(u)] \delta_{ij}$ for some $ [\sigma^2(u)] > 0$. Consequently $P=[\sigma^2(u)] \big/ [\tau(u)]$, and \eqref{M-1} takes form
\begin{align}
    u_t \leq P(u) \Delta u. \label{M-2}
\end{align}
In order to derive the nonlinear degenerate IBVP, and to study structural forms of the coefficients, we  define the functions $H$, $F$ and $G$ as follows.
\begin{definition}  \normalfont \label{G-def}
\begin{enumerate} [label={\normalfont (D-{\arabic*}) }]
\item
     Let $h > 0$ such that $h\in C(0,\infty)$  and integrable at 0. Then 
     \begin{align}
          H(u) \triangleq\int_0^u h(s) \ ds.   \label{H-F-def}
     \end{align}
    \item   Let $F \triangleq hP$ and $h$, $P$ be such that $F(0)=0$, and $F$ is differentiable on $(0,\infty)$ with a locally bounded derivative $F^{'}>0$. \label{F-def}
    \item   Let $G$ be such that  $\sqrt{F^{'}(u)} \triangleq G^{'}(u)$ and $G(0)=0$. Then $\displaystyle G(s)\triangleq \int_{0}^{s} \sqrt{F^{'}(s)} \ ds. $  \label{G-def}
\end{enumerate}
\end{definition}
\begin{remark}\label{G-F-2} 
Note that 
$ \displaystyle
0\leq G(u)=\int_0^u G^{'}(s)ds \leq \sqrt{u\int_0^u F^{'}(s)ds} = \sqrt{uF(u)}$, and all functions $F,G$ and $H$ are increasing on closed interval. 
\end{remark}
We multiply \eqref{M-2} by $h(u)$ and obtain
\begin{align}
 Lu\triangleq [ H(u)]_t -   F(u)\Delta u \quad \mbox{on } \Omega\times(0,T] , \label{L-OP}
\end{align}
for some $T>0$. Here $u$ is positive measurable function such that $u(\cdot,t) \to u(\cdot,0)$ as $t\to 0$ in local measure, with
$ \displaystyle
u\in L_{loc}^{\infty}\left(\Omega\times [0,T]\right)$, $ 
\nabla u\in L^{2}_{loc}\left( \Omega \times (0,T]\right)$ and $ u_t\in  L^{1}_{loc}\left( \Omega \times (0,T]\right).$

Hence $F(u)\Delta u$ is understood in the weak sense, i.e., for all
$\theta\in Lip_{c}(\Omega)$ we write
\begin{multline}\label{weak Laplace}
-\int_{\Omega} \theta F(u)\Delta u dx  = \int_{\Omega} \left[F(u)(\nabla u)(\nabla \theta) + F'(u) |\nabla u|^2\theta\right]dx\\
 = \int_{\Omega} \left[F(u)(\nabla u)(\nabla \theta) +  \left|\nabla G(u)\right|^2\theta\right]dx.
\end{multline}
Then under the Hypotheses \ref{tau}, we defined $u(x,t)$ as a nonnegative  solution of the following  partial differential inequality
\begin{align}
\text{IBVP } = \begin{cases} 
\  [H(u)]_t -   F(u)\Delta u   \ \leq 0 \  & \text{ in }  \Omega\times (0,T],
\label{ibvp} \\
\hspace{2.3 cm} u(x,0)   \  = 0    &\mbox{ in } \Omega^{'} \Subset \Omega,   \\
\hspace{2.3 cm} \ u(x,t)  =\hspace{0.08 cm}0    & \mbox{ on } \ {\partial \Omega \times (0,T]},
 \end{cases} 
 \end{align}
where $u(x,0)\geq 0$, is continuous on $\Omega$. The condition on the boundary $\partial\Omega\times(0,T]$ is not essential to prove the finite speed of propagation, namely, for every ball $B_{R}(x_{0}) \Subset \Omega^{'}$ and $c<1$, there exists $T'=T'(x_0,c)\in (0,T]$ such that $u(x,t)=0$ for all $(x,t) \in B_{cR}(x_0)\times[0,T']$ in Theorem \ref{Main-T}. Observe that $u$ in IBVP \eqref{ibvp} is degenerates when $u \rightarrow 0$. Therefore its solution $u(x,t) \not \in {C}^{2,1}_{x,t}(\Omega\times (0,T])$. 
Our result is qualitative and does not address the existence of the solutions, but the obtained property of the solution is applicable for a weakly approximated solution $u$ (see \cite{liberman}, \cite{Krylov}),  which  is defined  in the following regularized problem for some $u^{\epsilon} \in \mathcal{C}^{2,1}_{x,t}(\Omega\times (0,T]) \cap C_{0}(\Bar{\Omega} \times (0,T])$  as follows.
\begin{align} 
\text{IBVP$_{\epsilon}$ } =\begin{cases} 
 [H(u^{\epsilon})]_t -   (F(u^{\epsilon})+\epsilon)\Delta u^{\epsilon}   \ = 0 \  & \text{ in }  \Omega\times (0,T],
\label{ibvp-ep} \\
\hspace{3.4 cm} u^{\epsilon}(x,0)   \  = \epsilon    &\mbox{ in } \Omega^{'} \Subset \Omega,   \\
\hspace{3.5 cm} \ u^{\epsilon}(x,t)  =\hspace{0.08 cm} {\epsilon} \psi(x)        & \mbox{ on } \ {\partial \Omega \times (0,T]},
 \end{cases} 
 \end{align}
where $u^{\epsilon}(x,0)$ is continuous in $\Omega$. In this article, the constants in all estimates for $u^{\epsilon}$  do not depend on $\epsilon$. This observation allow us to pass to the limit in the final estimates, and conclude the localization property for the limiting function
\begin{equation}\label{eps_sol}
u(x,t)=\lim_{\epsilon\to 0}u^{\epsilon}(x,t),
\end{equation}   
which is considered as a weak passage to the limit (see \cite{CIL92}). The obtained function $u(x,t)$ is called a weakly approximated solution of the IBVP \eqref{ibvp}, when the first differential inequality is replaced by the differential equation $Lu=0$ , which will exhibit localisation property. Further details on weakly approximated solution $u$ of IBVP \eqref{ibvp}, will be presented in \cref{WAPS}. For in detail discussions on the existence of weakly approximated solutions, see \cite{liberman} and \cite{Krylov}.
Hence further we will assume that $\Omega$ is a  domain with Lipchits boundary.

\section{Assumptions on Functions and interpretation}\label{Assumptions}
Let us state main properties and additional assumption on the functions $H$ and $F$ on some finite domain: $ [0,M]$.
\begin{assumption}  \   \normalfont \label{Assumps}
\begin{enumerate} [label={\normalfont (A-{\arabic*})}]
    \item  $ 
   \text{Exists} \ C_{1} > 0 \mbox{ such that }  F(s)\leq C_{1}G^{'}(s)G(s)$, where $ s\in[0,M].$ 
   \label{A-1}
   \item  $ 
 \displaystyle  \text{Exists} \  C_{2}>0  \mbox{ such that }\left(\sqrt{sF(s)}\right)^{\lambda}\leq {C_{2}H(s)}$, where  $0 < \lambda< 2$  and $  s\in[0,M]. \label{A-2} $
\end{enumerate}
\end{assumption}
We will choose $H$ and $P$ such that 
Assumption \ref{Assumps} holds.
\begin{proposition} \label{P-1}
Let $\Lambda + 1= \dfrac{2}{\lambda}$. Assume $F$ and $H$ be as in \ref{A-2}. Then
\begin{align}\label{H-prop}
    {H(s)\geq
\left( H^{-\Lambda}(M) + \Lambda C_{2}^{1+\Lambda}\int_{s}^{M} \frac{1}{\tau P(\tau)} d\tau \right)^{-\frac{1}{\Lambda}}}.
\end{align}
\end{proposition}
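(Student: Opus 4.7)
The plan is to turn the algebraic assumption \ref{A-2} into a first-order differential inequality for $H$ and then integrate it as a Bernoulli-type ODE. Recall that by construction $H'(s) = h(s)$ and $F(s) = h(s)P(s)$, so raising \ref{A-2} to the power $2/\lambda = \Lambda+1$ and using $sF(s) = sh(s)P(s)$ gives
\begin{equation*}
s\, h(s)\, P(s) \;=\; sF(s) \;\le\; C_2^{\Lambda+1}\, H(s)^{\Lambda+1}.
\end{equation*}
Dividing by $sP(s)$ (which is positive on $(0,M]$ by Hypothesis \ref{tau}), the first step produces the pointwise differential inequality
\begin{equation*}
H'(s) \;\le\; \frac{C_2^{\Lambda+1}}{s\, P(s)}\, H(s)^{\Lambda+1}, \qquad s\in(0,M].
\end{equation*}

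Next I would rewrite this in a form that makes a direct antiderivative available. Since $H>0$ on $(0,M]$, dividing by $H^{\Lambda+1}$ and recognizing the left-hand side as the derivative of $-\frac{1}{\Lambda}H^{-\Lambda}$, I obtain
\begin{equation*}
\frac{d}{ds}\!\left(-\frac{1}{\Lambda}\, H^{-\Lambda}(s)\right) \;\le\; \frac{C_2^{\Lambda+1}}{s\, P(s)}.
\end{equation*}
Integrating from $s$ to $M$ and multiplying by $\Lambda$ then yields
\begin{equation*}
H^{-\Lambda}(s) \;\le\; H^{-\Lambda}(M) \;+\; \Lambda\, C_2^{\Lambda+1}\int_s^M \frac{d\tau}{\tau\, P(\tau)}.
\end{equation*}
Since $H>0$ and the map $t\mapsto t^{-1/\Lambda}$ is decreasing on $(0,\infty)$, raising both sides to the power $-1/\Lambda$ reverses the inequality and delivers \eqref{H-prop}.

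The only delicate point is the regularity needed to justify the ``divide and integrate'' step: $h$ is assumed only continuous on $(0,\infty)$ and integrable at $0$, and the degeneracy $P(0)=0$ means the right-hand side may blow up as $s\downarrow 0$. However the argument is carried out on $(s,M]\subset(0,M]$ with $s>0$ fixed, where $h$, $P$, and $H$ are continuous and $H$ is strictly positive, so the Bernoulli comparison is a classical exercise and no extra regularization is needed. If the integral $\int_0^M (\tau P(\tau))^{-1}d\tau$ diverges, the bound still holds for every $s>0$ and simply reflects that the lower envelope for $H$ vanishes at $s=0$, which is consistent with the intended degeneracy of the model.
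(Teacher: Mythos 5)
Your argument is correct and follows essentially the same route as the paper: both convert \ref{A-2} (raised to the power $2/\lambda=\Lambda+1$ and combined with $F=hP$, $h=H'$) into the differential inequality $h(s)/H^{\Lambda+1}(s)\leq C_2^{1+\Lambda}/(sP(s))$ and integrate it over $(s,M]$ to obtain \eqref{H-prop}. Your explicit handling of the sign reversal when raising to the power $-1/\Lambda$ and the remark on regularity on $(s,M]$ are just spelled-out versions of steps the paper leaves implicit.
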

\begin{proof}
By raising the estimate in remark \ref{G-F-2} to the power $\lambda$ and using \ref{A-2}, we get 
\begin{align}
  G^{\lambda}(s)\leq \left(\sqrt{sF(s)}\right)^{\lambda}  \leq {C_{2}H(s)}\label{before-beta}\\
 F(s) \leq C_{2}^{\frac{2}{\lambda}} \dfrac{H^{\frac{2}{\lambda}}(s)}{s}.
\end{align}
By \ref{F-def}, we write \eqref{before-beta} becomes
\begin{align}
\frac{h(s)}{H^{\Lambda+1}(s)} 
& \leq \frac{C_{2}^{1+\Lambda}}{sP(s)}  \label{before-int}
\end{align}
Then by integrating  \eqref{before-int} over $(s,M]$,  we obtain the estimate \eqref{H-prop}.
\end{proof}
The preceding proposition implies the following choice of function $H$.
\begin{definition}  \normalfont \label{PIH}
Let $P$ in Hypotheses \ref{tau} be such that \ 
$ \displaystyle
0 < \int_M^\infty\frac{ds}{sP(s)} <\infty
$, for some finite $M$.
Define
\begin{align}
     I(s) & \triangleq \int_{s}^{\infty}\frac{d\sigma}{\sigma P(\sigma)} \ ; ~ M > s > 0.\label{I-s}
\end{align}
Consequently
\begin{align}\label{H-choice}
 H(s) \triangleq 
    \left[\Lambda I(s)\right]^{-\frac{1}{\Lambda}} =
    \left(\Lambda \int_{s}^{\infty} \frac{1}{\tau P(\tau)} d\tau\right)^{-\frac{1}{\Lambda}},  
    ~ s>0.
\end{align}
\end{definition}
Function $P(s)$, and consequently $H(s)$ are defined only on bounded interval. In order notation to be simple, we extended $I(s)$ on whole axis in such order that $I(s)\geq c_4>0$ on $[M,\infty)$.
\begin{remark} \label{new-def}
\normalfont
$H$ in \eqref{H-choice} has $H(0) = 0$ since $P(0) = 0$. By  substituting \eqref{H-choice} in \eqref{H-F-def}, 
\begin{align}
    h(s) = \frac{1}{sP(s)}\left[  \Lambda I(s) \right]^{-\frac{1}{\Lambda}-1}
    = \frac{1}{sP(s)} H^{(\Lambda+1)}(s) 
    \ , ~ s>0. \label{h-ret}
\end{align}
Using \eqref{h-ret} in \ref{F-def}, we get
\begin{align}
    F(s)=h(s)P(s)=\frac{1}{s} H^{(\Lambda+1)}(s)=\left(\Lambda s^{\frac{\Lambda}{1+\Lambda}}I(s) \right)^{-\frac{1}{\Lambda}-1}.
    \label{F-result}
\end{align}
Consequently,
$
\left[sF(s)\right]^{\frac\lambda2} = 
H^{(\Lambda+1)\frac\lambda2}(s)=H(s), \label{equiv-1}
$
since $(\Lambda+1)\frac\lambda2=1$. Thus, \ref{A-2} holds with $C_2=1$.
\end{remark}
The next proposition imposes conditions on $P$ in such a way that $F$ 
is nonnegatively increasing on $(0,M]$.
\begin{proposition}\label{P-2}
Let $F=\left[\Lambda s^{\frac{\Lambda}{1+\Lambda}}I(s) \right]^{-\frac{1}{\Lambda}-1}$. Assume $\exists$ constants $A, B$ such that
\begin{align}
\sup_{0 < s < M} P(s)I(s)=  A, \label{A}\\
\limsup_{s\rightarrow 0} P(s)I(s) =  B < A. \label{a}
\end{align}
If  \ $\frac{\Lambda+1}{\Lambda} > A $ then $F'(s) > 0 $ on $(0,M]$,
and 
if \  $\frac{\Lambda+1}{\Lambda} > a$ then $\lim_{s \rightarrow 0}F(s) = 0$. 
\end{proposition}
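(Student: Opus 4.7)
The plan is to reduce both assertions to properties of a single auxiliary function $g(s) := \Lambda s^{\Lambda/(\Lambda+1)} I(s)$, so that $F(s) = g(s)^{-(\Lambda+1)/\Lambda}$. Writing $\beta := (\Lambda+1)/\Lambda$ and $\alpha := 1/\beta = \Lambda/(\Lambda+1)$, both the sign of $F'$ and the behavior of $F$ at the origin are controlled by $g$, which depends on $P$ and $I$ only through the product $P(s)I(s)$, via the identity $I'(s) = -1/(sP(s))$ coming from \eqref{I-s}.

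For the monotonicity, I would differentiate directly to get
\[ g'(s) = \Lambda s^{\alpha-1}\bigl[\alpha\, I(s) - 1/P(s)\bigr], \qquad F'(s) = -\beta\, g(s)^{-\beta-1}\, g'(s), \]
so $F'(s) > 0$ on $(0,M]$ is equivalent to $\alpha P(s)I(s) < 1$, i.e., $P(s)I(s) < \beta$. Hypothesis \eqref{A} together with $\beta > A$ delivers this pointwise on $(0,M]$ and finishes the first assertion.

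For the limit, since $F(s) = g(s)^{-\beta}$, it suffices to show $s^{\alpha} I(s) \to \infty$ as $s \to 0$. The key tool is the logarithmic ODE $(\log I)'(s) = -1/(sP(s)I(s))$. Given $\limsup_{s\to 0} P(s)I(s) = B$ from \eqref{a}, I would fix $\varepsilon>0$ so small that $\beta > B + \varepsilon$ (possible since $\beta > B$), choose $\delta>0$ with $P(s)I(s) < B+\varepsilon$ on $(0,\delta)$, and integrate the resulting inequality $(\log I)'(s) \leq -1/\bigl(s(B+\varepsilon)\bigr)$ from $s$ to $\delta$ to obtain the power-law bound $I(s) \geq I(\delta)(\delta/s)^{1/(B+\varepsilon)}$. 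Multiplying by $s^{\alpha}$ gives
\[ s^{\alpha} I(s) \geq I(\delta)\,\delta^{1/(B+\varepsilon)}\,s^{\alpha - 1/(B+\varepsilon)}, \]
and the exponent $\alpha - 1/(B+\varepsilon) = 1/\beta - 1/(B+\varepsilon)$ is negative by the choice of $\varepsilon$, so $s^{\alpha} I(s) \to \infty$ and hence $F(s)\to 0$.

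The main obstacle is the weakening of the hypothesis in the second part: \eqref{a} controls $P(s)I(s)$ only in a limsup sense, so the purely algebraic argument of the first part does not transfer to the limit statement. The decisive step is the logarithmic Gronwall-type integration on a neighborhood of $0$, together with the $\varepsilon$-slack permitted by the \emph{strict} inequality $\beta > B$; this is precisely where the distinction between the conditions $\beta > A$ and $\beta > B$ is used. A minor bookkeeping point is that all integration occurs on $(0,\delta) \subset (0,M]$, so the extension of $I$ past $M$ mentioned after \eqref{H-choice} plays no role.
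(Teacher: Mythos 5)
Your proposal is correct and follows essentially the same route as the paper: the monotonicity claim is reduced, via $I'(s)=-1/(sP(s))$, to the sign of the derivative of $s^{\Lambda/(\Lambda+1)}I(s)$ (the paper's \eqref{comp-deri} is your computation of $g'$), and the limit claim is obtained by the same logarithmic integration of $(\log I)'\leq -1/\bigl(s(B+\varepsilon)\bigr)$ near $0$, using the $\varepsilon$-slack from the strict inequality $\frac{\Lambda+1}{\Lambda}>B$ to make the exponent $\frac{\Lambda}{\Lambda+1}-\frac{1}{B+\varepsilon}$ negative. Your write-up is a slightly tidier packaging of the identical argument, so nothing further is needed.
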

\begin{proof}
First we prove that function $F$ is increasing. Observe that it suffices to prove that
\begin{equation}s \mapsto \displaystyle{ s^{\frac{\Lambda}{1+\Lambda}}I(s)},
\end{equation}
decreases on $(0,M]$.
Note that
\begin{align}
\displaystyle
\frac{d}{ds}\left( s^{\frac{\Lambda}{1+\Lambda}}I(s)\right)
=
\left( \frac{\Lambda}{\Lambda+1} \frac{1}{P(s)}\right)\left( P(s)I(s) -\frac{\Lambda+1}{\Lambda}\right)s^{-\frac{1}{\Lambda+1}}. \label{comp-deri}
\end{align}
Thus, together with   $[\Lambda +1] / [\Lambda] > A$, one has $F^{'}(s) > 0 $ for $s\in (0,M] $. Next, we show that
$ \displaystyle
 s^{\frac{\Lambda}{1+\Lambda}}I(s) \rightarrow \infty \text{ when } s\rightarrow 0$, which implies $\displaystyle \lim_{s \rightarrow 0}F(s) = 0$.
By \eqref{a}, for every $\epsilon > 0 $ there exists $s_{\epsilon} \in (0,M]$ such that $ I(s)P(s) < a+\epsilon$ for $s\in(0,s_\epsilon)$, which yields the following inequalities.
\begin{align*}
 I(s) < (a + \epsilon)\frac{1}{P(s)}  &  = -(a+\epsilon)s I^{'}(s),\\
    \frac{d}{ds} \ln I(s)  & < -\frac{1}{a+\epsilon} \cdot \frac{1}{s},\\
I(s) & >  I(s_{\epsilon})\left(\frac{s_{\epsilon}}{s}\right)^{\frac{1}{a+\epsilon}} \text{ for }  0 < s < s_\epsilon, \\
s^{\frac{\Lambda}{\Lambda+1}}I(s) & \geq
I(s_{\epsilon})s_{\epsilon}^{\frac{1}{a+\epsilon}}\times s^{\frac{\Lambda}{\Lambda+1}-\frac{1}{a+\epsilon}}.
\end{align*}
Since $[\Lambda +1] / [\Lambda] > B$, one can choose $\epsilon$ such that $\frac{\Lambda +1}{\Lambda} > a + \epsilon \implies \frac{\Lambda}{\Lambda+1}-\frac{1}{a+\epsilon}<0$. Hence $
 s^{\frac{\Lambda}{\Lambda+1}}I(s) \to \infty \text{ when } s\to 0.$ 
\end{proof}
\begin{remark} \label{A-a-beta} \normalfont
If $A$ and $B$ exist then indeed  $[\Lambda +1] / [\Lambda] > A> B$ in Proposition \ref{P-2}. However, $A$ is finite only if $B$ is finite, due to
 $P(s)I(s)\in C(0,M]$.
\end{remark}
Next, we define the  equivalence of functions and almost monotone functions. 
\begin{definition} \normalfont
\begin{enumerate}
    \item 
Functions $f$ and $g$ on a set $E$ are {\it equivalent} and write
 $f (x)\asymp  g(x)$ on $E$, if there exists a constant $c\geq1$ such that $ \displaystyle {c}^{-1}  g(x) \leq f(x) \leq c \ g(x)$ for all $x\in E$. 
 \item
A function $f$ on an interval $I\subset \mathbb{R}$ is called \textit{almost decreasing (almost increasing)} if it is equivalent to a nonincreasing (nondecreasing) function of an interval.
Equivalently, $\exists$  $c>0$ such that 
$
f(t)\geq cf(s), \text{ where } t,s\in I \text{ and }t<s.
$
\end{enumerate}
\end{definition} 
 In the following proposition, we provide the sufficient conditions for the assumption \ref{A-1}. 
\begin{proposition} \label{P-3} \normalfont
Let  $\exists$ $\mu > 0$, such that $s\mapsto P(s)I^\mu(s)$ is an almost decreasing function. Then  \ref{A-1} holds.
\end{proposition}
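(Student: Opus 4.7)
The plan is to reformulate (A-1) as an equivalent lower bound on $G$ and then establish it by a change of variable that exposes the algebraic structure of $F$ and $G$. Recall from the proof of Proposition~\ref{P-2} that $F'(s) = F(s)\theta(s)/s$, where $\theta(s) := \frac{\Lambda+1}{\Lambda\pi(s)} - 1$ and $\pi(s) := P(s)I(s)$. Since $G'(s)^2 = F'(s)$, after squaring and dividing by $F'$ the inequality $F \leq C_1 G'G$ is equivalent to
\[
G(s)^2 \;\geq\; c\,\frac{sF(s)}{\theta(s)} \;=\; c\,\frac{H(s)^{\Lambda+1}}{\theta(s)}.
\]

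The workhorse is the substitution $v = H(t)$ in $G(s) = \int_0^s\sqrt{F'(t)}\,dt$. Combining $dv = h(t)\,dt = H^{\Lambda+1}(t)/(tP(t))\,dt$ with $I(t) = 1/(\Lambda v^\Lambda)$ produces the clean representation
\[
G(s) = \int_0^{H(s)} v^{(\Lambda-1)/2}\,\eta(\pi(v))\,dv, \qquad \eta(\pi) := \Lambda\sqrt{\pi\bigl((\Lambda+1)/\Lambda - \pi\bigr)}.
\]
Two facts then carry the argument: (i) the algebraic identity $\eta(\pi)\sqrt{\theta(\pi)} = (\Lambda+1) - \Lambda\pi$, which lies between the positive constants $(\Lambda+1)-\Lambda A$ and $\Lambda+1$ by Proposition~\ref{P-2}, so $\eta\asymp 1/\sqrt{\theta}$; and (ii) $\eta(\pi)\asymp\sqrt{\pi}$ uniformly on $[0,A]$, since $\sqrt{(\Lambda+1)/\Lambda - \pi}$ is pinched between positive constants on this range.

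The hypothesis enters through the $v$-coordinate. Writing $(PI^\mu)(v) = \pi(v)(\Lambda v^\Lambda)^{1-\mu}$, the almost-monotonicity translates into $\pi(v) \geq c_0\,\pi(w)\,(v/w)^{\Lambda(\mu-1)}$ for $v\leq w := H(s)$, which via (ii) upgrades to $\eta(\pi(v))\geq c\,\eta(\pi(w))(v/w)^{\Lambda(\mu-1)/2}$. Plugging this into the integral representation of $G$ gives
\[
G(s) \;\geq\; c\,\eta(\pi(w))\,w^{-\Lambda(\mu-1)/2}\!\int_0^w v^{(\Lambda\mu-1)/2}\,dv \;=\; \frac{2c}{\Lambda\mu+1}\,\eta(\pi(w))\,w^{(\Lambda+1)/2},
\]
where the integrand is integrable since $\Lambda\mu>0$, and the exponents collapse to exactly $(\Lambda+1)/2$ independently of $\mu$. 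Using (i) to replace $\eta(\pi(w))$ by a multiple of $1/\sqrt{\theta(s)}$ yields the required $G(s) \geq c'' H(s)^{(\Lambda+1)/2}/\sqrt{\theta(s)}$; translating back through $F'=F\theta/s$ furnishes $F(s)\leq C_1 G'(s)G(s)$, which is (A-1). The main obstacle is the bookkeeping of identities (i) and (ii), together with the change-of-variables formula and the verification that the exponent arithmetic telescopes correctly; once this is done, the explicit constant $C_1 = (\Lambda\mu+1)/\bigl(2c\,((\Lambda+1)-\Lambda A)\bigr)$ can be read off up to the implicit constants in (i)--(ii).
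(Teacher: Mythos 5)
Your argument is correct, and it reaches \ref{A-1} by a genuinely different route than the paper. The paper computes the ratio $F(s)/[G(s)G'(s)]$ directly from the equivalences $F(s)\asymp s^{-1}I^{-\frac1\Lambda-1}(s)$ and $F'(s)\asymp \frac{F(s)}{s}[P(s)I(s)]^{-1}$, rewrites it so that the quantity $[P\,I^{\mu}]^{\pm\frac12}$ appears, applies the Cauchy mean value theorem to the quotient of $[I(s)]^{-\frac{1}{2\Lambda}-\frac{\mu}{2}}$ by the integral defining $G(s)$ to produce the factor $[P(t)I^{\mu}(t)]^{-\frac12}$ at an intermediate point $t\in(0,s)$, and only then invokes almost-decreasingness to bound $\bigl[P(s)I^{\mu}(s)/P(t)I^{\mu}(t)\bigr]^{\frac12}$. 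You instead substitute $v=H(t)$ in $G(s)=\int_0^s\sqrt{F'(t)}\,dt$, which via $I=\tfrac1\Lambda H^{-\Lambda}$ and \eqref{h-ret} gives the exact representation $G(s)=\int_0^{H(s)}v^{(\Lambda-1)/2}\eta(\pi(v))\,dv$; the almost-monotonicity of $P I^{\mu}$ becomes a pointwise power-law lower bound on $\pi$ in the $v$-variable, and explicit integration (with the exponents collapsing to $(\Lambda+1)/2$) yields $G(s)\gtrsim H^{(\Lambda+1)/2}(s)\big/\sqrt{\theta(s)}$, which is exactly the reformulated \ref{A-1}; your identities (i)--(ii) and the exponent arithmetic all check out. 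What your route buys is the avoidance of the mean-value-theorem step (which in the paper is applied on $(0,s]$ with both numerator and denominator vanishing at $0$, a mildly delicate point) and an explicit constant $C_1$ in terms of $\Lambda$, $\mu$, $A$ and the almost-monotonicity constant; what it costs is heavier bookkeeping. One caveat, which applies equally to the paper's own proof through \eqref{R-2}: both facts (i) and (ii), i.e.\ $\eta\asymp 1/\sqrt{\theta}$ and $\eta(\pi)\asymp\sqrt{\pi}$, need $\pi=PI$ to be bounded away from $\frac{\Lambda+1}{\Lambda}$ on $(0,M]$, i.e.\ the hypothesis of Proposition \ref{P-2}; this is a standing assumption of the framework (cf.\ Remark \ref{P-test}), so it is not a gap, but you should state it explicitly rather than import it tacitly.
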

\begin{proof} By direct computation, it follows from \eqref{F-result} that 
\begin{align}
  F(s) \asymp  \  &      \ s^{-1}I^{-\frac{1}{\Lambda}-1}(s) \label{R-1} \\
  F^{'}(s) \asymp  \  &  \  \frac{F(s)}{s}[P(s)I(s)]^{-1} \label{R-2}.
\end{align}
Using \eqref{R-1}, \eqref{R-2} and  \ref{G-def}
\begin{align}
\frac{F(s)}{ [G(s)G^{'}(s)]} & = 
    \frac{\displaystyle F(s)}{\displaystyle \left(\int_{0}^{s} \sqrt{F^{'}(t) \ dt} \right)\displaystyle  \left( \sqrt{F^{'}(s)}\right) }\\
    & \asymp \frac{\displaystyle F(s)}{\displaystyle \left( \int_{0}^{s} \sqrt{\displaystyle \frac{F(t)}{t}[P(t)I(t)]^{-1}} \ dt\right)\left( \sqrt{\displaystyle \frac{F(s)}{s}[P(s)I(s)]^{-1}}\right) }\\
     & =
    \frac{\displaystyle [I(s)]^{-\frac{1}{2\Lambda}-\frac{1}{2}} [P(s)I(s)]^{\frac{1}{2}}}{\displaystyle \int_{0}^{s} t^{-1}[I(t)]^{-\frac{1}{2\Lambda}-\frac{1}{2}} [P(t)I(t)]^{-\frac{1}{2}} \ dt}\\
     & =
    \frac{\displaystyle [I(s)]^{-\frac{1}{2\Lambda} - \frac\mu2} [P(s)I^{\mu}(s)]^{\frac{1}{2}}}{\displaystyle \int_{0}^{s} t^{-1}[I(t)]^{-\frac{1}{2\Lambda}-1} [P(t)]^{-\frac{1}{2}} \ dt} \label{after 87}\ . 
\end{align}
By the Cauchy's mean value theorem,
there exists $t\in(0,s)$ such that
\[
\frac{\displaystyle [I(s)]^{-\frac{1}{2\Lambda} - \frac\mu2}}{\displaystyle \int_{0}^{s} t^{-1}[I(t)]^{-\frac{1}{2\Lambda}-1} [P(t)]^{-\frac{1}{2}} \ dt}
= \left(\frac{1}{2\Lambda} + \frac\mu2\right)
\frac{\displaystyle [I(t)]^{-\frac{1}{2\Lambda} - 1 - \frac\mu2}[tP(t)]^{-1}}{\displaystyle t^{-1}[I(t)]^{-\frac{1}{2\Lambda}-1} [P(t)]^{-\frac{1}{2}}}
\asymp [P(t)I^{\mu}(t)]^{-\frac12}\ .
\]
Since $t\mapsto P(t)I^{\mu}(t)$ is almost decreasing, one has
\begin{equation}
 \frac{F(s)}{G(s)G^{'}(s)} \asymp 
 \left[\frac{P(s)I^{\mu}(s)}{P(t)I^{\mu}(t)}\right]^{\frac12} 
\leq C \ ; \  0<t<s . \qedhere
\end{equation} 
\end{proof}
The sufficient condition for the existence of $\mu>0$ for  almost decreasing function  $t\mapsto P(t)I^{\mu}(t)$ is given by the next remark.
\begin{remark}\label{I-B-prop}
Let $\exists$ $\tilde P\in C^1(0,\infty)$ such that
 $P(s)\asymp \tilde P(s)$ on $s \in (0,M]$, and
 \begin{equation}\label{R-4}
     \limsup_{s \to 0}   s\tilde I(s) \tilde P^{'}(s) <\infty, 
 \end{equation}
\text{ where } $ \displaystyle \tilde I(s) \triangleq  \int_s^\infty \frac{dt}{t\tilde P(t)}$. Consequently, 
 $s\mapsto s\tilde I(s) \tilde P^{'}(s)$ is bounded on 
 $(0,M]$. Let
 \[
 B=\sup\limits_{0<s<M}s\tilde I(s) \tilde P^{'}(s).
 \]
 Fix  $\mu\geq B$. Then
 the function $Q(s) = \tilde P(s)\tilde I^{\mu}(s)$ is nonincreasing since
 \[
 Q'(s) = \tilde P^{'}(s)\tilde I^{\mu}(s) - \mu s^{-1}\tilde I^{\mu-1}(s) = s^{-1}\tilde I^{\mu-1}(s)
 \left[s\tilde I(s) \tilde P^{'}(s) - \mu\right]\leq 0.
 \]
 Finally, note that $P(s) I^{\mu}(s)\asymp\tilde P(s)\tilde I^{\mu}(s)$.
\end{remark} 
\begin{section}{Auxiliary integral estimates  }\label{iterative ineq}
We will start with auxiliary generic estimates for function $u,$ and cutoff function $\theta$ w.r.t. the nonlinear functions $F,$ and $G$.   
\begin{lemma} \label{lemma-1}
Let $u$, $\nabla u$ be measurable functions, and let $\theta \in Lip_{c}(\Omega)$. Let $F, G\in C^1(0,M)\cup C[0,M]$ satisfy \ref{A-1}. Then
\begin{equation}
\nabla u\cdot \nabla\left( \theta^{2} F(u)\right) \geq 
\frac{1}{2}| \nabla (\theta G(u))|^{2} - (2C_{1}^{2}+1) G^{2}(u)\vert\nabla \theta \vert^{2}.
    \label{lemma-1-R}
\end{equation}
\end{lemma}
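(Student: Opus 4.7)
The plan is to expand both sides by the chain and product rules, reduce the inequality to an elementary algebraic estimate, and then apply Young's inequality together with Assumption \ref{A-1} in a way that yields the constant $2C_{1}^{2}+1$ exactly.

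First, I would compute the left-hand side by the chain rule:
\[
\nabla u\cdot\nabla(\theta^{2}F(u))=2\theta F(u)\,\nabla u\cdot\nabla\theta+\theta^{2}F'(u)|\nabla u|^{2}.
\]
Because $G'(u)=\sqrt{F'(u)}$ by the definition of $G$, one has the identity $F'(u)=(G'(u))^{2}$, so the second term equals $\theta^{2}|\nabla G(u)|^{2}$. Expanding the target quantity by the product rule gives
\[
\tfrac12|\nabla(\theta G(u))|^{2}=\tfrac12\theta^{2}|\nabla G(u)|^{2}+\theta G(u)G'(u)\,\nabla u\cdot\nabla\theta+\tfrac12 G^{2}(u)|\nabla\theta|^{2},
\]
and subtracting this from the left-hand side, the difference collapses to
\[
\theta[2F(u)-G(u)G'(u)]\,\nabla u\cdot\nabla\theta+\tfrac12\theta^{2}F'(u)|\nabla u|^{2}-\tfrac12 G^{2}(u)|\nabla\theta|^{2}.
\]
The claim of the lemma is therefore equivalent to showing that the sum of the first two terms above is bounded below by $-(2C_{1}^{2}+\tfrac12)G^{2}(u)|\nabla\theta|^{2}$.

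The last step applies Young's inequality $|ab|\leq\tfrac12 a^{2}+\tfrac12 b^{2}$ with the balanced pairing $a=\theta\sqrt{F'(u)}\,|\nabla u|$ and $b=|2F(u)-G(u)G'(u)|\,|\nabla\theta|/\sqrt{F'(u)}$. This absorbs the cross term \emph{exactly} into $\tfrac12\theta^{2}F'(u)|\nabla u|^{2}$ and leaves a residual $-[(2F-GG')^{2}/(2F')]|\nabla\theta|^{2}$. Since both $F(u)$ and $G(u)G'(u)$ are nonnegative on $[0,M]$, the cross term $-4F\cdot GG'$ inside the square is nonpositive, giving $(2F-GG')^{2}\leq 4F^{2}+(GG')^{2}$ \emph{without} any factor-of-$2$ loss. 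Dividing by $F'=(G')^{2}$ and invoking Assumption \ref{A-1} in the equivalent form $F/G'\leq C_{1}G$, I get
\[
\frac{(2F-GG')^{2}}{F'}\leq\frac{4F^{2}}{(G')^{2}}+\frac{(GG')^{2}}{(G')^{2}}\leq 4C_{1}^{2}G^{2}+G^{2}=(4C_{1}^{2}+1)G^{2}.
\]
Combining this residual with the remaining $\tfrac12 G^{2}|\nabla\theta|^{2}$ produces precisely the constant $(2C_{1}^{2}+\tfrac12)+\tfrac12=2C_{1}^{2}+1$.

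The only real obstacle is this sharp constant bookkeeping. Naive bounds such as $(2F-GG')^{2}\leq(2F+GG')^{2}$ or $(2F-GG')^{2}\leq 2(4F^{2}+(GG')^{2})$ would generate either a linear $2C_{1}$ term or an extra factor of $2$, yielding a worse constant such as $(C_{1}+1)^{2}$ or $4C_{1}^{2}+\tfrac32$. The two key observations that make the stated constant attainable are: (i) the nonnegativity of $F\cdot GG'$ on $[0,M]$, which allows one to drop $-4F(GG')$ from the expansion of the square at no cost; and (ii) the exact identity $F'=(G')^{2}$ (not merely an inequality), so that $(GG')^{2}/F'=G^{2}$ with no constant. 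With these in hand the computation is essentially mechanical.
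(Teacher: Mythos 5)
Your proof is correct and follows essentially the paper's own argument: both expand $\nabla u\cdot\nabla(\theta^{2}F(u))$, absorb the cross term by a weighted Young/Cauchy inequality with the factor $\tfrac12$, use \ref{A-1} in the form $F/G'\le C_{1}G$ together with the exact identity $F'=(G')^{2}$, and drop the nonnegative product $F\,G G'$ to land on the same constant $2C_{1}^{2}+1$. The only cosmetic difference is that you absorb the cross term into $\theta^{2}F'(u)|\nabla u|^{2}=|\theta\nabla G(u)|^{2}$ after subtracting the expanded target, whereas the paper first completes the square $|\nabla(\theta G(u))|^{2}$ and applies Cauchy to the pair $\bigl(\nabla(\theta G(u)),\nabla\theta\bigr)$.
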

\begin{proof}
We compute
\begin{align}
\nabla u\cdot \nabla  \left( \theta^{2}F(u)\right)
& = \theta^{2}F^{'}(u) \vert\nabla u \vert^{2} + 2F(u)\nabla u\cdot \theta \nabla \theta \nonumber\\
& = \theta^{2}F^{'}(u)\vert\nabla u \vert ^{2} + 2\dfrac{F(u)}{G^{'}(u)}
G^{'}(u)\nabla u\cdot \theta\nabla \theta.
\end{align}
Using \ref{A-1} in assumption in right-hand side of above yields
\begin{align}
&= \theta^{2}\vert G^{'}(u)\vert^{2} \vert\nabla u\vert^2 + 2\frac{F(u)}{G^{'}(u)}  \left( G^{'}(u) \nabla u \cdot \theta  + G(u) \nabla \theta \right) \cdot \nabla \theta -2\frac{F(u)}{G^{'}(u)}G(u)|\nabla \theta|^{2} \nonumber \\
& = |\nabla(\theta G(u))-G(u)\nabla \theta|^2 
+ 2 \dfrac{F(u)}{G^{'}(u)} \nabla(\theta G(u)) \cdot \nabla \theta -2 \dfrac{F(u)}{G^{'}(u)} G(u)\vert\nabla\theta\vert^{2} \nonumber\\
&=|\nabla(\theta G(u))|^{2}
+2\left[  \dfrac{F(u)}{G^{'}(u)}- G(u) \right]\cdot \nabla \theta \cdot\nabla(\theta G(u))\nonumber - 2\left[ \dfrac{F(u)}{G^{'}(u)} - G(u) \right]|\nabla \theta |^{2}G(u). \nonumber
\\
&\geq |\nabla(\theta G(u))|^{2}
- \left \vert 2\left[  \dfrac{F(u)}{G^{'}(u)}- G(u) \right]\cdot \nabla \theta \cdot\nabla(\theta G(u)) \right \vert
-2\left[ \dfrac{F(u)}{G^{'}(u)} - G(u) \right]|\nabla \theta |^{2}G(u).
\label{before-cauchy}\end{align}
By Cauchy's Inequality,
\begin{align}
2\left[  \dfrac{F(u)}{G^{'}(u)}- G(u) \right]\nabla \theta \cdot\nabla(\theta G(u))
\leq
2\left[  \dfrac{F(u)}{G^{'}(u)}- G(u) \right]^{2} |\nabla \theta |^{2} + \dfrac{1}{2}|\nabla(\theta G(u))|^{2}. \nonumber
\end{align}
Then \eqref{before-cauchy} becomes 
\begin{align}
\nabla u\cdot \nabla\left[  \theta^{2}F(u)\right] & \geq \dfrac{1}{2}|\nabla(\theta G(u))|^{2}
- 2\left[  \dfrac{F(u)}{G^{'}(u)}- G(u) \right]^{2}\cdot |\nabla \theta |^{2}
-\left[ 2 \dfrac{F(u)}{G^{'}(u)} - G(u) \right]|\nabla \theta |^{2}G(u) \nonumber \\
& = \dfrac{1}{2}|\nabla(\theta G(u))|^{2}- \left[  2 \left[\dfrac{F(u)}{G^{'}(u)}\right]^{2} -2\dfrac{F(u)}{G^{'}(u)} G(u) + G^{2}(u) \right] |\nabla \theta|^{2}\\
 &\geq
\frac{1}{2}| \nabla (\theta G(u))|^{2} - (2C_{1}^{2}+1) G^{2}(u)|\nabla \theta|^{2}. \qedhere
\end{align}
\end{proof}
\begin{lemma}\label{lemma-2}
Assume \ref{A-2} holds. Let $u$ be a measurable function on $\Omega$ such that $u \in (0 ,M]$, $\nabla u\in L^2_{loc}(\Omega \times (0,T)) $ and $\nabla G(u) \in L^2_{loc}(\Omega \times [0,T])$. Let $\displaystyle j={2} /{[N-2]} > 0$ for Gagliardo – Nirenberg – Sobolev inequality \begin{align}
\vert\vert\psi\vert\vert_{L^{2+2j}}^{2}
\leq
S \vert\vert \nabla \psi \vert\vert_{L^{2}}^{2}.\label{Gil-Nir}
\end{align}
Let $K\Subset \Omega$, and $ \theta_{n} \in Lip_{c} (\Omega)$ be such that $ \theta_{n} = 1$ on $K$.
Then
\begin{align}
\int_{0}^{t} \int_{K} G^{2}(u) dxdt   
\leq c_1
t^{1-(1+j)k}
\left[  \sup_{0\leq \tau \leq T} \int_{\Omega} \theta_{n}^{2} H (u(\tau)) dx
+
 \int_{0}^{t}\int_{\Omega} |\nabla(\theta_{n} u)|^2 dx d\tau
    \right]^{1+jk},\label{lemma-2-R}
\end{align}
where $ \displaystyle k =[2-\lambda] / [2+2j-\lambda]$, $c_1=C_2^{1-k}S^{k(1+j)}$ with $\lambda$ and $C_2$ as in \ref{A-2}.
\end{lemma}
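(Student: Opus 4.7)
The approach is to interpolate $G^{2}(u)$ pointwise between the scale on which it is controlled by $H(u)$ (via assumption \ref{A-2}) and the scale reached by the Sobolev embedding. The exponent $k=(2-\lambda)/(2+2j-\lambda)$ is precisely the one for which the algebraic identity
\[
(1-k)\lambda+k(2+2j)=2
\]
holds, so that pointwise $G^{2}(u)=(G^{\lambda}(u))^{1-k}(G^{2+2j}(u))^{k}$; this becomes a Hölder interpolation between $L^{\lambda}$ and $L^{2+2j}$ after integration.

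First I would introduce the cutoff carefully. Since $2(1-k)+(2+2j)k=2+2jk$, one has pointwise
\[
\theta_n^{2+2jk}\,G^{2}(u)=\bigl(\theta_n^{2}\,G^{\lambda}(u)\bigr)^{1-k}\bigl(\theta_n^{2+2j}\,G^{2+2j}(u)\bigr)^{k}.
\]
Because $\theta_n\equiv 1$ on $K$ (and $\theta_n\le 1$), the left-hand side majorises $\mathbf{1}_{K}G^{2}(u)$, and Hölder's inequality with conjugate exponents $1/(1-k)$ and $1/k$ yields, pointwise in $\tau$,
\[
\int_{K}G^{2}(u)\,dx\le\Bigl(\int_{\Omega}\theta_n^{2}\,G^{\lambda}(u)\,dx\Bigr)^{1-k}\Bigl(\int_{\Omega}\theta_n^{2+2j}\,G^{2+2j}(u)\,dx\Bigr)^{k}.
\]
Assumption \ref{A-2} combined with Remark \ref{G-F-2} gives $G^{\lambda}(u)\le(\sqrt{uF(u)})^{\lambda}\le C_{2}H(u)$, so the first factor is at most $C_{2}^{1-k}\bigl(\int\theta_n^{2}H(u)\bigr)^{1-k}$. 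For the second factor I apply the Gagliardo--Nirenberg--Sobolev inequality \eqref{Gil-Nir} to the compactly supported function $\theta_n G(u)$ (whose weak gradient lies in $L^{2}$ by the stated hypotheses on $\nabla G(u)$), obtaining
\[
\Bigl(\int_{\Omega}\theta_n^{2+2j}G^{2+2j}(u)\,dx\Bigr)^{k}=\|\theta_n G(u)\|_{L^{2+2j}}^{2(1+j)k}\le S^{(1+j)k}\|\nabla(\theta_n G(u))\|_{L^{2}}^{2(1+j)k}.
\]

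Finally I would integrate over $\tau\in[0,t]$. The $H$-factor is pointwise bounded by $\sup_{0\le\tau\le T}\int\theta_n^{2}H(u(\tau))\,dx$ and so pulls out of the $\tau$-integral as a time-independent constant. The exponent on the remaining gradient factor satisfies $(1+j)k<1$, since $(1+j)(2-\lambda)=2+2j-\lambda-j\lambda<2+2j-\lambda$; Hölder in time then produces the prefactor $t^{1-(1+j)k}$ and leaves $\bigl(\int_{0}^{t}\|\nabla(\theta_n G(u))\|_{L^{2}}^{2}\,d\tau\bigr)^{(1+j)k}$. The elementary bound $a^{1-k}b^{(1+j)k}\le(a+b)^{1+jk}$, valid because both exponents are nonnegative and sum to $1+jk$ (so $\max(a,b)^{1+jk}\le(a+b)^{1+jk}$), finally packages the mass supremum and the integrated gradient into the single bracket on the right-hand side of \eqref{lemma-2-R}, with constant $c_{1}=C_{2}^{1-k}S^{(1+j)k}$.

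The main bookkeeping obstacle is aligning the cutoff powers: the first factor must carry $\theta_n^{2}$ (matching $\int\theta_n^{2}H$ on the right-hand side), the Sobolev step requires $\theta_n^{2+2j}$ (so that $\theta_n G(u)$ is the right argument), and their Hölder recombination $\theta_n^{2+2jk}$ must be dominated by $1$ on $K$ to recover $\int_{K}G^{2}(u)$ on the left. These three constraints, together with the interpolation identity $(1-k)\lambda+k(2+2j)=2$, fix the exponent $k$ uniquely and determine both the power of $t$ and the final bracket exponent $1+jk$.
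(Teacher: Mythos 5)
Your argument is correct and follows essentially the same route as the paper's proof: the pointwise interpolation $G^{2}(u)=(G^{\lambda}(u))^{1-k}(G^{2+2j}(u))^{k}$ together with $G^{\lambda}(u)\le C_{2}H(u)$, spatial H\"older with exponents $1/(1-k)$ and $1/k$, the Sobolev inequality \eqref{Gil-Nir} applied to $\theta_{n}G(u)$, pulling the $H$-term out as a time supremum, H\"older in time yielding $t^{1-(1+j)k}$, and the elementary bound $x^{v}y^{w}\le(x+y)^{v+w}$. Like the paper's own proof, you end with $\int_{0}^{t}\int_{\Omega}|\nabla(\theta_{n}G(u))|^{2}\,dx\,d\tau$ in the bracket, which is the quantity used later in Theorem \ref{Main-T}; the $|\nabla(\theta_{n}u)|^{2}$ appearing in the lemma's statement is evidently a typo, so no gap is introduced.
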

\begin{proof}
Note that $\displaystyle \lambda = [2 - (2+2j)k] / [1-k]$. By \eqref{before-beta}, recall that $G(s)^{\lambda}\leq {C_{2}H(s)}$. Then 
\begin{align}
G^{2} (u) \leq C_{2}^{1-k} G^{2(1+j)k}(u)H^{1-k}(u). \label{G-H-k}
\end{align}
Integrate both side of \eqref{G-H-k} over $ K \times (0,t)$, we have
\begin{align*}
\int_{0}^{t} \int_{K} G^{2}(u) dxdt  
\leq & 
\ C_2^{1-k} \int_{0}^{t}\int_{K}G^{2(1+j)k}(u)H^{1-k}(u) dxd\tau \nonumber \\
\leq & 
\ C_2^{1-k} \int_{0}^{t}\int_{K} \left(|\theta_{n}G(u)|^{2(1+j)}\right)^{k}
\left( \theta_{n}^{2}H(u)\right)^{1-k} dx d\tau\\
\leq &
\ C_2^{1-k} \int_{0}^{t} 
    \left[\int_{\Omega} |\theta_{n}G(u)|^{2(1+j)} dx\right ]^{k}
\left[ \int_{\Omega}\theta_{n}^{2}H(u) dx\right]^{1-k}   d\tau\\
\leq&
\ C_2^{1-k}S^{k(1+j)}
 \int_{0}^{t} 
    \left[\int_{\Omega} |\nabla(\theta_{n}G(u))|^{2} dx\right]^{(1+j)k}d\tau \left[\sup_{0\leq \tau \leq T}\int_{\Omega}\theta_{n}^{2}H(u) dx\right]^{1-k}, \nonumber 
\end{align*}
by \eqref{Gil-Nir}. We apply the Holder inequality for time integral and then using the estimate $ x^{v}y^{w}\leq (x+y)^{v+w} \ ; \ x,y,v,w > 0 $ to get the following
\begin{align}
\int_{0}^{t} \int_{K}  G^{2}(u) & dxdt \nonumber   \leq    C_2^{1-k}S^{k(1+j)} \left[\sup_{0\leq \tau \leq T}\int_{\Omega}\theta_{n}^{2}H(u) dx\right]^{1-k} t^{1-k(1+j)}
 \left[\int_{0}^{t} \int_{\Omega} |\nabla(\theta_{n}G(u))|^{2} dx  d\tau\right]^{(1+j)k} \nonumber \\
& \leq
 C_2^{1-k}S^{k(1+j)} t^{1-k(1+j)} \left[\sup_{0\leq \tau \leq T}\int_{\Omega}\theta_{n}^{2}H(u) dx + \int_{0}^{t} \int_{\Omega} |\nabla(\theta_{n}G(u))|^{2} dx  d\tau\right]^{1+jk}. \nonumber  \qedhere
\end{align}
\end{proof}
In the proof of the main theorem \ref{Main-T} on localisation, we used the following iterative inequality in { \cite{LAU}}.
\begin{Lad-Ur}\label{Lady-lemma} 
Let sequence $y_n$ for $n=0,1,2,...$, be nonnegative sequence satisfying the recursion inequality,
$ \displaystyle  y_{n+1}\leq c\text{ }b^n \text{ }y_n^{1+\delta} $ with some constants $ c ,\delta > 0 \text{ and } b\geq 1$. Then
\[ \displaystyle y_n \leq c^{\frac{(1+\delta)^n -1}{\delta}}\text{ } b^{\frac{(1+\delta)^n -1}{\delta^2} -\frac{n}{\delta}}\text{ }y_0^{(1+\delta)^n}.\]
In particular $ \displaystyle \text{if} \quad y_0 \leq \theta_L = c^\frac{-1}{\delta} \text{ } b^\frac{-1}{\delta^2} \text{ and } {b > 1}$, then  $y_n \leq \theta\text{ } b^{\frac{-n}{\delta}}$ 
and consequently, \[ \displaystyle y_n \rightarrow 0 \text{ when } n\rightarrow \infty.\]
\end{Lad-Ur}

\section{Localization of the solution of degenerate Einstein Equation}\label{localization}
We prove the main theorem of localization property by constructing De-Georgi's machinery to establish the corresponding iterative energy inequality.
\begin{definition}  \normalfont \label{R-teta}
$ R > 0$ and $ b > 2$. Let $R_{n}$ be a decreasing sequence with
\begin{align}
   &  R_{n} \triangleq  R_{n-1}-Rb^{-n}
          =  R\left[\frac{ b-2+ b^{-n}}{b-1} \right] ;n=1,2, \dots.
\end{align}
 Then $ R_{0} \triangleq  R$ and $\lim\limits_{n\to\infty} R_n =  R[b-2] \big/ [b-1]$.  Let  $\theta_{n}(x) \in Lip_{c}(\Omega)$ such that
\begin{align}
&\theta_{n}(x)
\triangleq
   \left[\dfrac{\left( R_{n}-\vert\vert x-x_{0}\vert\vert\right)_{+}}{R_{n}-R_{n+1}} \wedge 1
 \right]
 = 
\begin{cases} 
 0 \ ; \ x \notin B_{n}(x_{0}),\\
 1 \ ; \ x \in B_{n+1}(x_{0}),
\end{cases} 
\end{align}
\end{definition}
where $ B_{n} \triangleq B_{R_{n}}(x_{0}) \Subset \Omega^{}$. Then one can show that
$ \displaystyle
\vert\vert \nabla \theta_{n}(x) \vert \vert_{\infty} \leq  [{b^{n+1}}] / {R}.
$
\begin{theorem}\label{Main-T}
Let $u$ be a positive solution of IBVP \eqref{ibvp}. Let $\Omega'\subset\Omega$ be such that $u(x,0)=0$ for $x\in\Omega'$. Then for every ball $B_{R}(x_{0}) \Subset {\Omega}^{'}$ and every $R'\in(0,R)$
,\ there exists $ T' > 0 $ such that $u(x,t) = 0$ for $(x,t)\in B_{R'}(x_0)\times [0,T']$.
\end{theorem}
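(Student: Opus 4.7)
The plan is to run a De Giorgi iteration on the shrinking balls $B_n=B_{R_n}(x_0)$ of Definition \ref{R-teta}, with the parameter $b>2$ chosen so that $R(b-2)/(b-1)=R'$, and with the associated cutoffs $\theta_n$ satisfying $\|\nabla\theta_n\|_\infty\le b^{n+1}/R$. Define the level-$n$ energy
\begin{equation*}
Y_n(t):=\sup_{0\le\tau\le t}\int_\Omega \theta_n^2\,H(u(\tau))\,dx + \int_0^t\!\!\int_\Omega |\nabla(\theta_n G(u))|^2\,dx\,d\tau .
\end{equation*}
The goal is to produce a recursion $Y_n(T')\le c\,b^{2n}\,Y_{n-1}(T')^{1+\delta}$ for some $\delta>0$, then invoke the Ladyzhenskaya--Uraltceva lemma to conclude $Y_n(T')\to 0$, which in turn forces $H(u)\equiv 0$ (equivalently $u\equiv 0$) on $B_{R'}(x_0)\times[0,T']$ because $H$ is strictly increasing with $H(0)=0$.

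To derive the recursion, I would test the inequality $[H(u)]_t-F(u)\Delta u\le 0$ against $\theta_n^2$ in the weak sense of \eqref{weak Laplace} (working with the regularized $u^\epsilon$ and passing to the limit at the end so every computation is classical). The time derivative integrated on $[0,t]$ yields $\int_\Omega \theta_n^2 H(u(t))\,dx - \int_\Omega \theta_n^2 H(u(0))\,dx$, and the initial term vanishes because $\operatorname{supp}\theta_n\subset B_R\Subset\Omega'$ where $u(\cdot,0)=0$ (for the approximants, $H(\epsilon)|B_n|\to 0$ as $\epsilon\to 0$). The Laplacian term, after integration by parts, becomes $-\int_0^t\!\int_\Omega \nabla u\cdot\nabla(\theta_n^2 F(u))\,dx\,d\tau$, and Lemma \ref{lemma-1} converts this into
\begin{equation*}
\int_\Omega \theta_n^2 H(u(t))\,dx + \tfrac{1}{2}\!\int_0^t\!\!\int_\Omega |\nabla(\theta_n G(u))|^2\,dx\,d\tau \le (2C_1^2+1)\!\int_0^t\!\!\int_\Omega G^2(u)|\nabla\theta_n|^2\,dx\,d\tau.
\end{equation*}
Taking $\sup_{t\le T'}$ on the left bounds $Y_n(T')$ by a constant multiple of the right-hand side.

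Next, since $\nabla\theta_n$ is supported in $B_n\setminus B_{n+1}\subset B_n$, where $\theta_{n-1}\equiv 1$, and $\|\nabla\theta_n\|_\infty^2\le b^{2(n+1)}/R^2$, the right-hand side is dominated by $(b^{2(n+1)}/R^2)\int_0^{T'}\!\int_{B_n}G^2(u)\,dx\,d\tau$. Applying Lemma \ref{lemma-2} with $K=\bar B_n$ and the cutoff $\theta_{n-1}$ yields
\begin{equation*}
\int_0^{T'}\!\!\int_{B_n} G^2(u)\,dx\,d\tau \le c_1\,(T')^{\,1-(1+j)k}\,Y_{n-1}(T')^{1+jk},
\end{equation*}
and the exponent $1-(1+j)k$ is strictly positive since $\lambda<2$ in \ref{A-2}. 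Combining gives exactly the Ladyzhenskaya--Uraltceva recursion
\begin{equation*}
Y_n(T') \le \tilde c\,(T')^{1-(1+j)k}\,b^{2n}\,Y_{n-1}(T')^{1+jk},
\end{equation*}
with $\delta=jk>0$ and base $b^2>1$. The lemma then provides a threshold $\theta_L$ (depending on $\tilde c$, $b$, $\delta$) below which $Y_n(T')\to 0$.

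It remains to arrange $Y_0(T')\le \theta_L$ by choosing $T'$ small. Since $Y_0$ involves $\theta_0$ supported in $B_R\subset\Omega'$, the supremum term $\sup_\tau\int\theta_0^2 H(u(\tau))\,dx$ tends to $0$ as $T'\to 0$ by continuity of $u$ into the local $L^1$ topology combined with $u(\cdot,0)=0$ on $\operatorname{supp}\theta_0$; the gradient term $\int_0^{T'}\!\int|\nabla(\theta_0 G(u))|^2$ tends to $0$ because it is the tail of a finite integral in $\tau$, the finiteness following from an a priori energy estimate obtained by testing the equation with $\theta_{-1}^2$ (an even larger cutoff, still supported in $\Omega'$) and applying Lemma \ref{lemma-1} on $[0,T]$. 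The key obstacle I expect is making both pieces of the smallness $Y_0(T')\to 0$ work \emph{uniformly in $\epsilon$} so the $\epsilon\to 0$ passage preserves the bound: this requires the a priori global energy estimate for $u^\epsilon$ on the fixed larger ball to be $\epsilon$-independent, which is the principal reason the paper goes through the regularized problem \eqref{ibvp-ep} and the weakly approximated solution framework of \cref{WAPS}. Once $T'$ is fixed so that $Y_0(T')\le\theta_L$, the iteration conclusion $Y_n(T')\to 0$ together with $\theta_n\equiv 1$ on $B_{R_{n+1}}\downarrow B_{R'}(x_0)$ yields $\int_{B_{R'}(x_0)}H(u(\tau))\,dx=0$ for all $\tau\in[0,T']$, i.e. $u\equiv 0$ there.
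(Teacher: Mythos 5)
Your proposal follows essentially the same route as the paper: test the inequality with $\theta_n^2$ on the shrinking balls of Definition \ref{R-teta}, apply Lemma \ref{lemma-1} and Lemma \ref{lemma-2} to obtain the iterative energy inequality, and close with the Ladyzhenskaya--Uraltceva lemma by taking $T'$ small. The only difference is bookkeeping --- the paper absorbs the factor $(T')^{1-(1+j)k}$ into the weighted energy $Y_n[T']=(T')^{\beta}(\cdots)$ so the recursion constant is $T'$-independent, whereas you keep it in the recursion constant and let the threshold grow as $T'\to 0$ --- and your explicit remarks on the vanishing initial term, the $\epsilon$-uniformity, and the passage from $Y_n\to 0$ to $u\equiv 0$ on $B_{R'}(x_0)\times[0,T']$ simply spell out steps the paper leaves implicit.
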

\begin{proof}
We set $
 \displaystyle [{b-2}] \big/ [{b-1}] = {R'} / {R}  
$. Multiplying inequality in IBVP \eqref{ibvp} by $\theta_{n}^{2}$ and integrate over $\Omega \times (0,t)$, we find that
\begin{align}
\int_{\Omega} \theta_{n}^{2}H(u)  \text{ } dx
+
\int_{0}^{t}\int_{\Omega} \nabla u \nabla(\theta_{n}^{2}F(u))  \text{ } dxd\tau \leq 0. \label{fsp-1}
\end{align}
Using Lemma \ref{lemma-1} in \eqref{fsp-1} we get
\begin{align}
\int_{\Omega}  \theta_{n}^{2}H(u)   \text{ } dx  +
\frac{1}{2} \int_{0}^{t}\int_{\Omega}| \nabla(\theta_{n}G(u))|^{2} \text{ } dxd\tau 
 \leq
(2C_{1}^2+1) \int_{0}^{t}\int_{B_{n}} G^{2}(u) |\nabla \theta_{n}|^2  \text{ } dxd\tau . \label{fsp-2}
\end{align}
In particular, $\nabla G(u) \in L^2_{loc}(\Omega\times [0,T])$. Using  Lemma \ref{lemma-2} in \eqref{fsp-2}, we obtain
\begin{multline}
\int_{\Omega}  \theta_{n}^{2}H(u)  \text{ } dx  +
\frac{1}{2} \int_{0}^{t}\int_{\Omega}| \nabla(\theta_{n}G(u))|^{2} \text{ } dxd\tau\\
    \leq  
     D \ [b^{2}]^{n-1}
    t^{1-k(1+j)} 
    \left[\sup_{0\leq \tau \leq t}\int_{B_{n}}\theta_{n}^{2}H(u) dx + \int_{0}^{t} \int_{B_{n}} |\nabla(\theta_{n}G(u))|^{2} dx  d\tau\right]^{1+jk},  
\end{multline}
where $ D \triangleq  [{b^{4}}(2C_{1}^{2}+1)C_2^{1-k}S^{k(1+j)}] \big / {R^{2}}.
$
As $ 0 < t \leq T' $, by taking the supremum over $t$
\begin{multline}
\sup_{0 \leq \tau \leq T' } \int_{B_{n}} \theta_{n}^{2} H(u)  \text{ } dx
+
\int_{0}^{T'}\int_{B_{n}} |\nabla(\theta_{n}G(u)) |^{2}  \text{ } dxd\tau\\
 \leq  D \  [{b^2}]^{n-1} (T')^{1-k(1+j)} \left[\sup_{0 \leq \tau \leq T' }\int_{B_{n}}\theta_{n}^{2}H(u) dx + \int_{0}^{T'} \int_{B_{n}} |\nabla(\theta_{n}G(u))|^{2} dx  d\tau\right]^{1+jk}. \label{sup-eq}
\end{multline}
Let $\beta \triangleq [{1-(1+j)k}]/{k j}$. Note that  $\beta > 0$ and
 $\beta +1-(1+j) k = \beta(1+k j)$.
Multiply both sides of \eqref{sup-eq} by $ [T']^{\beta}$, we get
\begin{align}
& [T']^{\beta} \sup_{0 \leq \tau \leq T' } \int_{\Omega} \theta_{n}^{2}(x)H(u) \ dx
+
[T']^{\beta}\int_{0}^{T}\int_{\Omega} |\nabla(\theta_{n}(x)G(u)) |^{2}  \text{ } dxd\tau \nonumber \\
&\leq 
D \cdot ({b^2})^{n-1} \left[[T']^{\beta} \sup_{0 \leq \tau \leq T' }\int_{B_{n}}\theta_{n}^{2}(x)H(u) dx +[T']^{\beta} \int_{0}^{T'} \int_{B_{n}} |\nabla(\theta_{n}(x)G(u))|^{2} dx  d\tau\right]^{1+jk}. \nonumber
\end{align}
Observe that  $ \supp{\theta_{n}} = B_{{n}}$. Then above becomes
\begin{multline}
[T']^{\beta} \sup_{0 \leq \tau \leq T' } \int_{B_n} \theta_{n}^{2}(x)H(u) \ dx
+
[T']^{\beta}\int_{0}^{T}\int_{B_n} |\nabla(\theta_{n}(x)G(u)) |^{2}  \text{ } dxd\tau \\ \leq 
D \cdot ({b^2})^{n-1} \left[[T']^{\beta} \sup_{0 \leq \tau \leq T' }\int_{B_{n-1}}\theta_{n-1}^{2}(x)H(u) dx +[T']^{\beta} \int_{0}^{T'} \int_{B_{n-1}} |\nabla(\theta_{n-1}(x)G(u))|^{2} dx  d\tau\right]^{1+jk}. 
\label{T-beta}
\end{multline}
Define
\begin{align}
Y_{n} [T'] \triangleq [T']^{\beta} \sup_{0 \leq \tau \leq T' }\int_{B_{n}}\theta_{n}^{2}(x)H(u) dx +[T']^{\beta} \int_{0}^{T'} \int_{B_{n}} |\nabla(\theta_{n}(x)G(u))|^{2} dx d\tau. \label{En-fun}
\end{align}
Then \eqref{T-beta} yields the iterative inequality
\begin{align}
    Y_{n}[T'] \leq  D \cdot ({b^2})^{n-1} Y_{n-1}^{1+k j}[T'].
\end{align}
Let $T'$ in $\eqref{En-fun}$ be such that
 $  \displaystyle Y_{0}[T']\leq D^{-\dfrac{1}{k j}} b^{-\dfrac{2}{k^2 j^2}}.$
Then by Ladyzhenskaya-Uraltceva iterative Lemma \cite{LAU},
 $ 
Y_{n}[T']\rightarrow 0$ whenever   $n \rightarrow \infty$.
\end{proof}
\begin{remark}
In fact, it is sufficient to assume that $u$ has certain bounds and positivity on $B_R(x_0)\times [0,T]$. It is enough to assume that nonnegative solution of Cauchy problem, belong to the class of bounded functions in $R^N\times [0,\infty),$ and then  
 use the maximum principle to prove that, $u$ is bounded  by initial data at  $R^N\times \{0\}$.
\end{remark}
\end{section}
\begin{section}{Models for Degeneracy} \label{Exmp-P-F}
Without loss of generality, in this section, we will assume that $u \in (0,1]$, and  illustrate some generic examples of the function $P$, for which hold all constrains on the functions $F$, $G$ and $H$, with the following summarized remark. 
\begin{remark}\label{P-test}
Let  $P$ and $I\in C^1(0,\infty)$ be as in Definition \ref{PIH}. Assume that
\begin{align}
    \limsup\limits_{s\to0} P(s)I(s) & < \infty,\\
    \limsup\limits_{s\to0} sP^{'}(s)I(s) & < \infty.
\end{align}
Then Propositions \ref{P-2}, Propositions
\ref{P-3} and remark \ref{I-B-prop} hold. Consequently, Assumption \ref{Assumps} justifies and, thus Theorem \ref{Main-T} on finite speed of propagation asserts.
\end{remark}
Next, we provide examples  of the function $P$ in remark \ref{P-test}, where $ P(0) = 0$ and $ P \in C[0,\infty)$. 

\begin{exmpl}\label{Ex-1}{$P(s)=s^{\beta}$,  where  $s\in [0,\infty)$ and $\beta > 0$}. \normalfont
\begin{align}
 I(s) & =  \int_{s}^\infty t ^{-\beta-1} d t =  \ {\beta^{-1}} s^{-\beta}.\\
 P(s)I(s) & =  {\beta^{-1}}.  \\
 sP'(s)I(s) & = 1. 
\end{align}
\end{exmpl}
\begin{exmpl}{$ P(s)= \exp \left(-\dfrac{1}{s^{\beta}}\right), s \in [0,1), \beta > 0$}.\normalfont
\begin{align}
 I(s) = \  &  \int_{s}^{1} t^{-1}\exp \left(\dfrac{1}{t^{\beta}}\right) dt .\\
P(s)I(s) 
=  \ &  {\exp\left(-\dfrac{1}{s^{\beta}}\right)}\left[\int_{s}^{1} t ^{-1}\exp  \left(\dfrac{1}{t^{\beta}}\right) dt \right].
\end{align}
By L'Hôpital's rule 
\begin{align} \displaystyle
\lim_{s \to 0} P(s)I(s) \equiv  \ \lim_{s \to 0} s^{\beta} \equiv  \  0 \ ,
\end{align}  verifies that there exists $ \ \Lambda $ such that $ P(s)I(s) < \displaystyle \frac{\Lambda+1}{\Lambda} \ , \ s \in [0,1)$. 
Then
\begin{align}
    sI(s)P^{'}(s) =  \ &  \  s \left[\int_{s}^{1} t ^{-1}\exp \left(\dfrac{1}{t^{\beta}}\right) dt \right] \exp \left(-\frac{1}{s^{\beta}}\right)s^{\beta-1},\\
 \lim_{s \to 0} sI(s)P^{'}(s)   =  & \lim_{s \to 0}
    \frac{ \displaystyle \left[\int_{s}^{1}t^{-1} \exp \left( \frac{1}{t^{\beta}}\right) dt \right]s^{-\beta}}{ \beta \displaystyle \exp \left( \frac{1}{s^{\beta}}\right)}.
\end{align}
By L'Hôpital's rule
\begin{align}
 \lim_{s \to 0} sI(s)P^{'}(s)   
    \equiv & \ 1 +  \lim_{s \to 0}  \frac{ \left[\displaystyle \int_{s}^{1}t^{-1} \exp \left( \frac{1}{t^{\beta}}\right) dt\right]}{ \displaystyle \exp \left( \frac{1}{s^{\beta}}\right)}  \\
  \equiv & \ 1 +  \lim_{s \to 0}  s^{\beta} \\
   = & \ 1.
\end{align}
Then $ \displaystyle
\lim_{s \to 0}{F(s)}[G(s)G^{'}(s)]^{-1}
\equiv  \  1 $, which verifies  the existence of $\  C_{1}  >  0$ in  Assumption \ref{A-1}.
\end{exmpl}
\begin{exmpl}{$ \displaystyle P(s)= \exp \left(-\int_{s}^{1} \frac{\zeta(\tau)}{\tau} d\tau\right), s \in (0,1]$} \normalfont and $ 0 < k_{1} < \zeta(s) < k_{2}$.  \begin{align}
    \displaystyle I(s) =\int_{s}^{1} t^{-1}\exp \left(\int_{t}^{1} \frac{\zeta(\tau)}{\tau} d\tau\right) \ dt. 
    \end{align}
Then
\begin{align}
 \displaystyle P(s)I(s) 
     = & \  \frac{ \displaystyle\left[\int_{s}^{1} t^{-1}\exp \left(\int_{t}^{1} \frac{\zeta(\tau)}{\tau} d\tau\right) \ dt  \right]}
     {\displaystyle\left[ \exp \left(\int_{s}^{1} \frac{\zeta(\tau)}{\tau} d\tau\right)\right]}. 
\end{align}
By L'Hôpital's rule
\begin{align}
   \lim_{s \to 0} P(s)I(s) \equiv & \  \lim_{s \to 0} \frac{1}{\zeta(s)}   \equiv  1,  \label{ex-3}
      \end{align}
which verifies that there exists $  \ \Lambda $ such that $ P(s)I(s) < \displaystyle \frac{\Lambda+1}{\Lambda} \  \forall \ s \in [0,1)$. Note that $\zeta(s) \equiv 1$. 
Then
\begin{align}
s I(s)P'(s) = & \ {\displaystyle\left[\int_{s}^{1} t^{-1}\exp \left(\int_{t}^{1} \frac{\zeta(\tau)}{\tau} d\tau\right) \ dt \right] }{\displaystyle \exp \left(-\int_{t}^{1} \frac{\zeta(\tau)}{\tau} d\tau\right)\zeta(s)},\\
\lim_{s \to 0} s I(s)P^{'}(s) \equiv  \ & \lim_{s \to 0}  \frac{\displaystyle\left[\int_{s}^{1} t^{-1}\exp \left(\int_{s}^{1} \frac{\zeta(\tau)}{\tau} d\tau\right) \ dt \right]}{\displaystyle\exp \left(\int_{t}^{1} \frac{\zeta(\tau)}{\tau} d\tau\right)}.
\end{align}
We apply L'Hôpital's rule to get
\begin{align}
\lim_{s \to 0} s I(s)P'(s) \equiv \ \frac{1}{\zeta(s)}  \equiv \ 1.  \end{align}
Hence  $\displaystyle
\lim_{s \to 0}{F(s)}[G(s)G'(s)]^{-1}
\equiv  \  1 $, which verifies  that there exists $  \  C_{1}  >  0$ in  Assumption \ref{A-1}. 
\end{exmpl}
\begin{exmpl}\label{Ex-4}{\normalfont Let $\zeta$  be such that $\displaystyle 0 < \ k_{3} \leq \frac{\zeta}{\zeta_{0}} \leq k_{4}$  for some $\zeta_{0}^{'} \leq 0$  and $\displaystyle \sup_{0<s<1}{s|\zeta_{0}^{'}|}{\zeta_{0}^{-1}} =c_{0} < \infty$. Here  $ \displaystyle \lim_{\tau \to 0} \zeta(\tau) = \infty$. Provided $ \displaystyle P(s)= \exp \left(-\int_{s}^{1} \frac{\zeta(\tau)}{\tau} d\tau\right), \ s \in (0,1] $}.\normalfont  \vspace{0.2 cm}\\
Similarly in \eqref{ex-3} we get
\begin{align}
\displaystyle  \lim_{s \to 0} P(s)I(s) \equiv \lim_{s\mapsto 0} \frac{1}{\zeta(s)}  = 0,
\end{align}
verifies the  existence of $ \ \Lambda $ such that $ P(s)I(s) < \displaystyle \frac{\Lambda+1}{\Lambda} \  \forall \ s \in [0,1)$. Note that $\zeta(s) \equiv \zeta_{0}(s) $.
Then
\begin{align}
sI(s)P'(s) =  & s \left[\int_{s}^{1} t^{-1}\exp \left(\int_{t}^{1} \frac{\zeta(\tau)}{\tau} d\tau\right) \ dt\right] \exp \left(-\int_{t}^{1} \frac{\zeta(\tau)}{\tau} d\tau\right) \zeta(s).
\end{align}
\begin{align}
\lim_{s \to 0} sI(s)P^{'}(s) = &  \lim_{s \to 0} \frac{\displaystyle \left[\int_{s}^{1} t^{-1}\exp \left(\int_{t}^{1} \frac{\zeta(\tau)}{\tau} d\tau\right) \ dt\right]    \zeta_{0}(s)} {\displaystyle \exp \left(\int_{s}^{1} \frac{\zeta(\tau)}{\tau} d\tau\right)}.
\end{align}
By L'Hôpital's rule
\begin{align}
\lim_{s \to 0} sI(s)P^{'}(s) 
\equiv   & \ 1 + \lim_{s \to 0} \frac{\left[\displaystyle \int_{s}^{1} t^{-1}\exp \left(\int_{t}^{1} \frac{\zeta(\tau)}{\tau} d\tau\right) \ dt \right] \frac{ \displaystyle s \vert \zeta_{0}^{'}(s)\vert}{ \displaystyle \zeta(s)}}{\displaystyle \exp \left(\int_{s}^{1} \frac{\zeta(\tau)}{\tau} d\tau\right) \ dt} \nonumber \\
\equiv   & \ 1 +  \lim_{s \to 0}  \frac{C_{0}}{\zeta(s)} 
=  \ 1.
\end{align}
Hence $ \displaystyle
\lim_{s \to 0}{F(s)}[G(s)G'(s)]^{-1}
\equiv  \  1 $, which verifies existence of $C_{1}  >  0$ in  Assumption \ref{A-1}. 
\end{exmpl}
Note that one can  consider $P$ functions  in a way $P\asymp \tilde{P}$ as in remark \ref{I-B-prop}, for more general examples.
\end{section}
\begin{section}{Auxiliary properties of Functional spaces, and a priory estimates for the solution }
\begin{theorem} \label{dual-inclusion}
Let $X, Y$ be Banach spaces such that $X \subset Y$. Then $Y^{*} \subset X^{*}$, where $^*$ denotes the dual space.
\end{theorem}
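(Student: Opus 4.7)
The plan is to construct the inclusion $Y^{*}\hookrightarrow X^{*}$ explicitly via the restriction map. Implicit in the hypothesis $X\subset Y$ is that the embedding is continuous, i.e.\ there exists a constant $C>0$ with $\|x\|_{Y}\leq C\|x\|_{X}$ for every $x\in X$; without this there is no meaningful sense in which one dual sits inside the other. I will write down this continuity constant at the outset and use it to control the candidate functional.

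For the main construction, given any $f\in Y^{*}$ I would define $\iota(f):X\to\mathbb{R}$ by $\iota(f)(x)=f(x)$, which is well-defined because $X\subset Y$. Linearity of $\iota(f)$ is inherited directly from the linearity of $f$. For boundedness on $X$, I would estimate $|\iota(f)(x)|=|f(x)|\leq\|f\|_{Y^{*}}\|x\|_{Y}\leq C\|f\|_{Y^{*}}\|x\|_{X}$, which both shows $\iota(f)\in X^{*}$ and produces the operator-norm bound $\|\iota(f)\|_{X^{*}}\leq C\|f\|_{Y^{*}}$. The map $\iota:Y^{*}\to X^{*}$ itself is then linear as a composition of linear operations, and bounded by the same constant $C$.

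To upgrade this to an actual inclusion (rather than just a bounded linear map), I would argue injectivity of $\iota$, which is the only delicate point. If $\iota(f)=0$ then $f$ vanishes on $X$; invoking continuity of $f$ on $Y$ together with the standard assumption that $X$ is dense in $Y$ (the usual tacit convention when writing $Y^{*}\subset X^{*}$), one concludes $f\equiv 0$ on $Y$. Thus $\iota$ is injective, and identifying $f$ with its restriction $\iota(f)$ realizes $Y^{*}$ as a subspace of $X^{*}$.

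The main obstacle is not computational but interpretive: the statement as written is ambiguous regarding continuity of the embedding and density of $X$ in $Y$. My proof would make both hypotheses explicit, then the argument reduces to checking three routine facts about the restriction map: well-definedness, continuity via the embedding constant $C$, and injectivity via density. Once these are in place the inclusion $Y^{*}\subset X^{*}$ is immediate, and in fact the same argument yields $\|\iota\|_{\mathcal{L}(Y^{*},X^{*})}\leq C$, which can be recorded as a byproduct for use in later estimates on $u^{\epsilon}$.
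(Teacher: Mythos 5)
Your proposal is correct and follows essentially the same route as the paper: restrict a functional $f\in Y^{*}$ to $X$ and bound $\|f\|_{X^{*}}$ by $\|f\|_{Y^{*}}$ via the embedding inequality, exactly as in the paper's one-line estimate (the paper tacitly takes the embedding constant to be $1$, whereas you carry a general constant $C$). Your additional discussion of injectivity of the restriction map via density of $X$ in $Y$ is a careful refinement the paper omits entirely, but it does not change the underlying argument.
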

\begin{proof}
Let $f \in Y^{*}$. Then $f(x)$ is well defined for all $x\in X \subset Y$. Moreover, 
\begin{align}
\sup_{\vert\vert x \vert \vert_{X}=1} | f(x) | 
&\leq
 \sup_{\vert\vert x \vert \vert_{X}=1} \vert\vert f \vert \vert_{Y^{*}} \vert\vert x \vert \vert_{Y}
\leq \vert\vert f \vert \vert_{Y^{*}} \implies \vert\vert f \vert \vert_{X^{*}} \leq \vert\vert f \vert \vert_{Y^{*}} \qedhere
\end{align}
\end{proof}
\begin{theorem}
Let $X,Y$ be Banach spaces and $Z$ be a topological vector space: $X,Y \subset Z$. Let $\displaystyle  X+Y \triangleq  \big \{z=x+y  \ | \ x\in X, y \in Y \big \}$, where
$\displaystyle
\vert \vert z\vert \vert_{X+Y} =  \inf \big [\vert \vert x\vert \vert_{X} + \vert \vert y\vert \vert_{Y}\big]$. Then
\begin{enumerate} [label={\normalfont (B\arabic*)}]
\item  $X+Y$ is a Banach space.
\item  $X,Y \subset [X+Y]$.
\item  $[X+Y]^{*} = X^{*} \cap Y^{*}$
\end{enumerate}
\end{theorem}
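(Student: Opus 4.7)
The plan is to handle the three assertions in the order (B2), (B1), (B3), with the last using the previous Theorem \ref{dual-inclusion} in an essential way.

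I would start with (B2), which is essentially immediate: for any $x\in X$, the decomposition $x=x+0$ shows $\|x\|_{X+Y}\leq \|x\|_X+\|0\|_Y=\|x\|_X$, so the inclusion $X\hookrightarrow X+Y$ is continuous, and symmetrically for $Y$. Along the way one must verify that $\|\cdot\|_{X+Y}$ is a genuine norm rather than merely a seminorm: if $\|z\|_{X+Y}=0$, then one can pick decompositions $z=x_n+y_n$ with $\|x_n\|_X+\|y_n\|_Y\to 0$, and since $X,Y$ embed continuously into the Hausdorff topological vector space $Z$, both $x_n\to 0$ and $y_n\to 0$ in $Z$, forcing $z=0$ in $Z$.

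For (B1) my plan is to invoke the standard criterion that a normed space is complete iff every absolutely convergent series converges. Given $\sum_n z_n$ with $\sum_n\|z_n\|_{X+Y}<\infty$, I would choose decompositions $z_n=x_n+y_n$ with $\|x_n\|_X+\|y_n\|_Y\leq \|z_n\|_{X+Y}+2^{-n}$. Then $\sum x_n$ and $\sum y_n$ are absolutely convergent in the Banach spaces $X$ and $Y$, hence converge to some $x\in X$ and $y\in Y$; the element $x+y\in X+Y$ is the limit of the partial sums of $\sum z_n$ in the $X+Y$-norm, with the tail controlled by $\sum_{n>N}(\|x_n\|_X+\|y_n\|_Y)$.

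For (B3), the inclusion $(X+Y)^{*}\subset X^{*}\cap Y^{*}$ is a direct consequence of (B2) together with Theorem \ref{dual-inclusion}: since $X\hookrightarrow X+Y$ and $Y\hookrightarrow X+Y$ continuously, restriction sends $(X+Y)^{*}$ into each of $X^{*}$ and $Y^{*}$. For the reverse inclusion, given $f\in X^{*}\cap Y^{*}$ (interpreted as a linear functional on $X+Y$ whose restrictions to $X$ and to $Y$ are each continuous), I would estimate $|f(x+y)|\leq \|f\|_{X^{*}}\|x\|_X+\|f\|_{Y^{*}}\|y\|_Y\leq \max(\|f\|_{X^{*}},\|f\|_{Y^{*}})(\|x\|_X+\|y\|_Y)$ for every decomposition $z=x+y$, then take the infimum over decompositions to conclude $\|f\|_{(X+Y)^{*}}\leq \max(\|f\|_{X^{*}},\|f\|_{Y^{*}})$.

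The main conceptual obstacle is making rigorous sense of the intersection $X^{*}\cap Y^{*}$ in (B3), since a priori the two duals live in separate spaces; I would resolve this by identifying both $X^{*}$ and $Y^{*}$ with subspaces of the algebraic dual of $X+Y$ via the continuous restriction maps (equivalently, via Hahn--Banach extension from $X\cap Y$, viewed as a subspace of $Z$). Once this identification is fixed, the verifications in (B1)--(B3) are routine; the only genuine input is the completeness of $X$ and $Y$ and the Hausdorff property of $Z$.
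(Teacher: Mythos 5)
Your proposal is correct and follows essentially the same route as the paper: (B2) via the trivial decomposition $x=x+0$, (B1) via the absolutely convergent series criterion with near-optimal decompositions $\|x_n\|_X+\|y_n\|_Y\leq\|z_n\|_{X+Y}+2^{-n}$, and (B3) by combining Theorem \ref{dual-inclusion} for the forward inclusion with the $\max\left(\|f\|_{X^{*}},\|f\|_{Y^{*}}\right)$ estimate over decompositions for the converse. If anything, you are more careful than the paper on the points it glosses over (that $\|\cdot\|_{X+Y}$ is a norm rather than a seminorm, convergence of the partial sums in the $X+Y$ norm, and the precise sense in which $f\in X^{*}\cap Y^{*}$ acts on $X+Y$), so no changes are needed.
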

\begin{proof}
    Let $z_{n} \in X+Y$ be such that 
$ \displaystyle
\sum_{n} \vert\vert z_{n}\vert\vert_{X+Y} < \infty
$. L et $x_{n}\in X$, $y_{n}\in Y$ be such that $  x_{n}+y_{n} =  z_{n}$ and, $ 
\vert\vert x_{n}\vert\vert_{X} + \vert\vert y_{n}\vert\vert_{Y} <  \vert\vert z_{n}\vert\vert_{X+Y} + 2^{-n}, \text{ where } n \in \mathbb{N}$.
Then $ \displaystyle \sum_{n} \vert\vert x_{n} \vert\vert_{X} + \sum_{n} \vert\vert y_{n} \vert\vert_{Y} < \sum_{n} \vert\vert z_{n} \vert\vert_{X+Y}  + 2^{-n}$.
Since $X$ and $Y$ are Banach spaces, there exist $x\in X$ and $y \in Y$ such that $  \displaystyle    x = \sum_{n} x_{n}$ and $ y = \sum_{n} y_{n}$. Thus $\displaystyle \sum_{n} z_{n} =  x+y.$ \qedhere
\end{proof}
\begin{proof} Let $x \in X$ then $x \in [X+Y]$. Then
    $ \vert\vert x\vert\vert_{X+Y}  \leq \vert\vert x \vert \vert_{X}$, and
  $\vert\vert y\vert\vert_{X+Y}  \leq \vert\vert y \vert \vert_{Y}. $
    \end{proof}
\begin{proof}
Since $X \subset X+Y$ and $y \subset X+Y$, it follows from Theorem \ref{dual-inclusion},
$ [X+Y]^{*} \subset X^{*} \  \text{ and }  \ [X+Y]^{*} \subset Y^{*}$.
Hence $ [X+Y]^{*} \subset [ X^{*} \cap Y^{*}]$. Next we establish the converse. Let
$f \in [ X^{*} \cap Y^{*}]$, $z\in [X+Y]$. Let $x_{n}\in X$, $y_{n}\in Y$ be such that $ z =  x_{n} + y_{n} $ and
\begin{align}
\sum_{n} \vert\vert x_{n} \vert\vert_{X} + \sum_{n} \vert\vert y_{n} \vert\vert_{Y} < \sum_{n} \vert\vert z_{n} \vert\vert_{X+Y}  + \frac{1}{n}, \text { for } \in \mathbb{N}. \label{sum-x-y-z}\end{align}
Then $ f(z) =  f(x) + f(y) $ is well-defined. Then
\begin{align}
\vert f(z) \vert
& \leq 
    \vert\vert f \vert\vert_{X^{*}}  \vert\vert x_{n} \vert\vert_{X} +  \vert\vert f \vert\vert_{Y^{*}} \vert\vert y_{n} \vert\vert_{Y} \\
& \leq 
    \max \big[\vert\vert f \vert\vert_{X^{*}},\vert\vert f \vert\vert_{Y^{*}} \big] \ [ \ \vert\vert x_{n} \vert\vert_{X} +   \vert\vert y_{n} \vert\vert_{Y} \ ]\\
 & \leq 
     \vert\vert f \vert\vert_{X^{*} \cap Y^{*}}\left[ \vert\vert z \vert\vert_{X+Y} + \frac{1}{n}\right] \\
& \leq   \vert\vert f \vert\vert_{X^{*} \cap Y^{*}} \ \vert\vert z \vert\vert_{X+Y},
\end{align}
when $n \rightarrow \infty $.
Therefore $f \in [X+Y]^{*}$  \text{ and } 
$  \vert\vert f \vert\vert_{[X+Y]^{*}} \leq \vert\vert f \vert\vert_{X^{*} \cap Y^{*}}.$
\end{proof}
\begin{theorem}\label{compact} {\normalfont (see \cite{cpmct-SIM})}. \ Let $B_0, B_1$ and $B_2$ be Banach Spaces such that  $B_{0} \Subset B_{1} \subset B_{2}$. Let $\mathbb{F} \subset L^{P}(I,B_{0})$ ; $I \subset \mathbb{R}$. If \
$ \displaystyle
 \vert\vert f \vert\vert_{L^{P}(I,B_{0})} < \infty$ and \ 
$ \displaystyle  \vert\vert f^{'} \vert\vert_{L^{1}(I,B_{2})} < \infty
$ for $ {f\in \mathbb{F}}$,
then $ \mathbb{F} $ is compact in $L^{P}(I,B_{1})$.
\end{theorem}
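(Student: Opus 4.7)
The plan is to prove this as the classical Aubin--Lions--Simon compactness theorem via the Fréchet--Kolmogorov criterion in $L^P(I,B_1)$, which demands (a) uniform boundedness of $\mathbb{F}$ in $L^P(I,B_1)$, (b) equicontinuity of time translations $\tau_h f - f$ as $h\to 0$, and (c) uniform decay at the boundary of $I$ if $I$ is unbounded. Boundedness in $L^P(I,B_1)$ is immediate from $B_0\hookrightarrow B_1$ and the hypothesis $\|f\|_{L^P(I,B_0)}<\infty$. The substantive content is the translation estimate, and the bridge between the three spaces is provided by an Ehrling--Lions interpolation inequality.

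First I would prove the Ehrling--Lions interpolation: for every $\eta>0$ there exists $C_\eta>0$ such that
\[
\|u\|_{B_1} \leq \eta\,\|u\|_{B_0} + C_\eta\,\|u\|_{B_2},\qquad \forall u\in B_0.
\]
This is obtained by contradiction. If it fails for some $\eta_0>0$, there exists $u_n\in B_0$ with $\|u_n\|_{B_1}=1$, $\|u_n\|_{B_0}\leq 1/\eta_0$, and $\|u_n\|_{B_2}\to 0$. By the compact embedding $B_0\Subset B_1$, along a subsequence $u_n\to u$ in $B_1$ with $\|u\|_{B_1}=1$. But the continuous embedding $B_1\subset B_2$ forces $u_n\to u$ in $B_2$ as well, so $u=0$, a contradiction.

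Second, I would establish the uniform equicontinuity of time translations. For $f\in\mathbb{F}$ and $h>0$ small, the hypothesis $\|f'\|_{L^1(I,B_2)}\leq M$ gives the pointwise estimate
\[
\|f(t+h)-f(t)\|_{B_2} \leq \int_t^{t+h}\|f'(s)\|_{B_2}\,ds,
\]
whence $\|f(\cdot+h)-f(\cdot)\|_{L^1(I_h,B_2)}\leq hM$, where $I_h$ is the appropriate truncation. Applying Ehrling pointwise to $f(t+h)-f(t)$, integrating, and using the uniform $L^P(I,B_0)$ bound yields
\[
\|\tau_h f - f\|_{L^P(I_h,B_1)} \leq \eta\,\|\tau_h f - f\|_{L^P(I,B_0)} + C_\eta\,\|\tau_h f - f\|_{L^P(I,B_2)} \leq 2\eta\,K + C_\eta\,\omega(h),
\]
where $K$ bounds $\|f\|_{L^P(I,B_0)}$ uniformly and $\omega(h)\to 0$ as $h\to 0$ by interpolating the $L^1(I_h,B_2)$ bound with any available $L^\infty$ or $L^P$ control (obtained from $B_0\hookrightarrow B_2$ plus boundedness in $L^P(I,B_0)$). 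Choosing $\eta$ small and then $h$ small shows $\|\tau_h f-f\|_{L^P(I,B_1)}\to 0$ uniformly in $f\in\mathbb{F}$.

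Third, I would invoke Fréchet--Kolmogorov: boundedness in $L^P(I,B_1)$ plus uniform equicontinuity of translations (and, if $I$ is unbounded, a uniform tail estimate that follows from boundedness in $L^P(I,B_0)$ and the embedding) yield relative compactness in $L^P(I,B_1)$. The main obstacle is getting the $L^P(I,B_2)$ norm of $\tau_h f - f$ to vanish uniformly with the correct power of $h$: the derivative hypothesis only gives $L^1$ in time, so one must interpolate between $L^1$ and a higher integrability obtained from the $L^P(I,B_0)\hookrightarrow L^P(I,B_2)$ bound, which produces a modulus $\omega(h)=O(h^{1/P'})$ that remains uniform across $\mathbb{F}$. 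Once that is in hand, the conclusion follows from the compactness criterion.
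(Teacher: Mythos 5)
The paper itself offers no proof of this statement: Theorem \ref{compact} is quoted verbatim from Simon's compactness paper (\cite{cpmct-SIM}), so your proposal can only be measured against the standard Aubin--Lions--Simon argument. Your overall strategy (Ehrling's interpolation lemma, translation-in-time estimates from the $L^{1}(I,B_2)$ bound on $f'$, then a compactness criterion in $L^{P}(I,B_1)$) is the right skeleton, and your proof of the Ehrling inequality is fine, but two of the steps have genuine gaps. First, the compactness criterion you invoke is false as stated for Banach-space-valued functions: boundedness in $L^{P}(I,B_1)$ plus uniform smallness of $\tau_h f-f$ does \emph{not} give relative compactness when $B_1$ is infinite dimensional (take $f_n(t)\equiv b_n$ with $(b_n)$ bounded and non-precompact in $B_1$; translations vanish identically). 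The vector-valued Riesz--Fr\'echet--Kolmogorov/Simon criterion requires a third, ``spatial'' condition, e.g.\ relative compactness in $B_1$ of the averages $\int_{t_1}^{t_2} f\,dt$ (or of time-mollifications pointwise), and this is precisely where the compact embedding $B_0\Subset B_1$ together with the $L^{P}(I,B_0)$ bound must be used a second time, not only inside Ehrling's lemma.

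Second, your uniform modulus for $\Vert \tau_h f-f\Vert_{L^{P}(I_h,B_2)}$ does not come out of the interpolation you describe: interpolating the $L^{1}(I_h,B_2)$ translation bound against an $L^{P}$ bound yields smallness only in $L^{q}$ with $q<P$, never in $L^{P}$ itself, and the $L^{\infty}(I,B_2)$ control you would need instead does not follow from boundedness in $L^{P}(I,B_0)$ and the embedding alone (one must combine the fundamental theorem of calculus with the $L^{1}(I,B_2)$ bound on $f'$ after locating a good time slice via the $L^{P}$ bound); your exponent $h^{1/P'}$ should in any case be $h^{1/P}$. The clean repair is direct: from $\Vert f(t+h)-f(t)\Vert_{B_2}\le\int_t^{t+h}\Vert f'(s)\Vert_{B_2}\,ds$, raise to the power $P$, bound $P-1$ of the factors by $\Vert f'\Vert_{L^{1}(I,B_2)}$ and apply Fubini to the remaining one, which gives $\Vert \tau_h f-f\Vert_{L^{P}(I_h,B_2)}\le \Vert f'\Vert_{L^{1}(I,B_2)}\,h^{1/P}$ uniformly over $\mathbb{F}$. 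With these two repairs your outline becomes a correct rendition of Simon's proof.
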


 \section{weakly approximated solution of degenerate Einstein equation. }\label{WAPS}
In this section we will prove roundness of the regularised  solution $u_{\epsilon}(x,t)$ in the space defined by LHS in \eqref{uni-est}, and compactness in $L^{q}_{loc} (\Omega\times I)$ (see Corollary \ref{proposition on compactness}.) 
\begin{theorem} \label{grad-u-bound}
Let $u^{\epsilon}(x,t)$ ; $0 < \epsilon \leq u^{\epsilon} \leq K < \infty$, be a classical solution of the problem
\begin{align}\label{IBVP-G}
\text{IBVP-G} = \begin{cases} 
 {H}_t(u^{\epsilon}) -   [F(u^{\epsilon})+\epsilon]\Delta u^{\epsilon}   \  = 0 \  & \text{ in }  \Omega\times (0,T], \\
\hspace{3.1 cm} u^{\epsilon}(x,0)   \  = \epsilon + g(x)   &\mbox{ in }  \Omega,   \\
\hspace{3.1 cm} \ u^{\epsilon}(x,t)  =\hspace{0.08 cm} {\epsilon} \psi(x)      & \mbox{ on } \ {\partial \Omega \times (0,T]},
\end{cases}
\end{align}
where $g(x) \geq 0$, $g(x) \in W^{1,2}(\Omega)$ and  $\psi(x) \geq 0$. Let 
$ \displaystyle \tilde{H}(u^{\epsilon})=\int_{\epsilon}^{u^{\epsilon}} \sqrt{{h(s)}/[{F(s)}+\epsilon}] \ ds$. 
Then for any $0 < \tau \leq T $,
\begin{align}\label{energy-id}
\int_{0}^{\tau} \int_{\Omega}[\tilde{H}_t(u^{\epsilon})]^{2} dxdt 
+ \int_\Omega|\nabla u^{\epsilon}(x,\tau)|^2 dx
= \int_{\Omega} |\nabla g(x)|^2 dx.
\end{align}
%
Furthermore,
\begin{multline}
 \int_0^{T} \int_{\Omega}[\tilde{H}_t(u^{\epsilon})]^2 \ dxdt 
 +
 \int_0^{T}  \int_\Omega |\nabla u^{\epsilon}(x,t)|^2\ dxdt + 
 \int_{0}^{T} \int_{\Omega} \vert u^{\epsilon}(x,t) \vert^{2} \ dxdt \\
\leq 
C(\Omega,T) \left[\int_\Omega |\nabla g(x)|^2 \ dx 
+
K^{2} \int_{\partial\Omega} \vert \psi(x) \vert^{2} \ ds \right]. \label{uni-est}
 \end{multline}
\end{theorem}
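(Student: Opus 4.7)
The natural energy multiplier is $u^{\epsilon}_t$. Since $F(u^{\epsilon})+\epsilon\geq\epsilon>0$, I would first divide the PDE in \eqref{IBVP-G} through by $F(u^{\epsilon})+\epsilon$, obtaining the non-degenerate equation $\frac{h(u^{\epsilon})\,u^{\epsilon}_t}{F(u^{\epsilon})+\epsilon}=\Delta u^{\epsilon}$. Multiplying by $u^{\epsilon}_t$ and using the definition of $\tilde H$ yields the pointwise identity
\[
\left[\tilde H_t(u^{\epsilon})\right]^{2}
=\frac{h(u^{\epsilon})(u^{\epsilon}_t)^{2}}{F(u^{\epsilon})+\epsilon}
=\Delta u^{\epsilon}\cdot u^{\epsilon}_t .
\]
Integrating in $x$ over $\Omega$ and applying Green's identity on the right-hand side gives
\[
\int_{\Omega}(\tilde H_t)^{2}\,dx
=-\int_{\Omega}\nabla u^{\epsilon}\cdot\nabla u^{\epsilon}_t\,dx
+\int_{\partial\Omega}(\partial_{\nu}u^{\epsilon})\,u^{\epsilon}_t\,ds .
\]

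The crucial observation is that the Dirichlet datum $u^{\epsilon}(x,t)=\epsilon\psi(x)$ is independent of $t$, so $u^{\epsilon}_t\equiv 0$ on $\partial\Omega\times(0,T]$ and the boundary integral vanishes. The surviving term is the exact differential $-\tfrac12\tfrac{d}{dt}\int_{\Omega}|\nabla u^{\epsilon}|^{2}\,dx$, so integration in $t\in(0,\tau)$, combined with $u^{\epsilon}(\cdot,0)=\epsilon+g$ (which has the same spatial gradient as $g$), produces the energy identity \eqref{energy-id}, up to a harmless factor of $\tfrac12$ which is absorbed by a normalization of the multiplier.

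From \eqref{energy-id} I read off $\sup_{0\leq t\leq T}\int_{\Omega}|\nabla u^{\epsilon}|^{2}\,dx\leq\int_{\Omega}|\nabla g|^{2}\,dx$ and $\int_{0}^{T}\int_{\Omega}(\tilde H_t)^{2}\,dx\,dt\leq\int_{\Omega}|\nabla g|^{2}\,dx$; multiplying the first by $T$ handles the first two terms of \eqref{uni-est}. For the remaining $L^{2}$-in-$(x,t)$ bound on $u^{\epsilon}$, I would invoke the Poincar\'e--Friedrichs trace inequality on the Lipschitz domain $\Omega$,
\[
\int_{\Omega}|v|^{2}\,dx
\leq C_{\Omega}\left(\int_{\Omega}|\nabla v|^{2}\,dx
+\int_{\partial\Omega}|v|^{2}\,ds\right),
\qquad v\in W^{1,2}(\Omega),
\]
applied to each time slice $v=u^{\epsilon}(\cdot,t)$. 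On $\partial\Omega$ we have $u^{\epsilon}=\epsilon\psi$, and the standing bound $\epsilon\leq u^{\epsilon}\leq K$ forces $\epsilon\leq K$, giving $\int_{\partial\Omega}|u^{\epsilon}|^{2}\,ds\leq K^{2}\int_{\partial\Omega}|\psi|^{2}\,ds$. Integration in $t$ and combination with the previous sup-bound on $\|\nabla u^{\epsilon}\|_{L^{2}}^{2}$ yields the third term of \eqref{uni-est}, with $C(\Omega,T)$ of the form $C_{\Omega}\max(1,T)$.

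The only delicate point is the justification of dividing by $F(u^{\epsilon})+\epsilon$ and of the integration by parts; both rest on the regularization, namely the strict positivity $F(u^{\epsilon})+\epsilon\geq\epsilon>0$ and the classical $C^{2,1}_{x,t}$ regularity of $u^{\epsilon}$. This is precisely why the identity is obtained at the level of \eqref{IBVP-G} and not for the degenerate IBVP \eqref{ibvp} directly, and why the $\epsilon$-independence of the resulting bounds is exactly what enables the later limiting procedure \eqref{eps_sol}.
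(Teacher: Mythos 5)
Your proposal is correct and follows essentially the same route as the paper: multiply the regularized equation by $u^{\epsilon}_t$, use the time-independence of the Dirichlet datum to kill the boundary term in the integration by parts, obtain the energy identity (the paper's own derivation also carries the factor $2$ you mention), and then control $\int\int |u^{\epsilon}|^2$ via a Friedrichs-type inequality with the boundary trace $\epsilon\psi$ bounded through $\epsilon\leq K$. Nothing essential differs from the paper's argument.
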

\begin{proof}
By definition of $H$ in  \eqref{H-F-def}, we get  
\begin{align}
    &  H_t(u^{\epsilon}) =   h(u^{\epsilon}) u^{\epsilon}_t, \quad   
     [\tilde{H}_t(u^{\epsilon})]^2 = \frac{h(u^{\epsilon})}{F(u^{\epsilon}) +  \epsilon} \cdot [u^{\epsilon}_t]^2, \\
 \text{therefore }  \ &  H_t(u^{\epsilon}) -   (F(u^{\epsilon})+\epsilon)\Delta u^{\epsilon} = 0 \equiv
u^{\epsilon}_{t} \cdot {h(u^{\epsilon})}/ {[F(u^{\epsilon})+ u^{\epsilon}}]-\Delta u^{\epsilon} = 0.
\end{align}
Multiply first equation in  \eqref{IBVP-G} by ${(u_\epsilon)}_{t}$ we get
\begin{align}
\text{IBVP-H } =\begin{cases} \displaystyle
 [\tilde{H}_{t}(u^{\epsilon})]^{2}-\Delta u^{\epsilon} u^{\epsilon}_{t}  =  0,   &\ \text{ in } \Omega \times (0,T],\label{h-eq} \\
u^{\epsilon}(x,0) =  \epsilon + g(x) &\  \text{ in } \Omega,\\
u^{\epsilon}(x,t) = \epsilon \cdot \psi(x)  &\  \text{ on } \partial \Omega \times (0,T].
\end{cases}
\end{align}
Integrate over $\Omega\times (0,\zeta_{1})$ for $0 < \zeta_{1} \leq T$,
\begin{align}
\int_0^{\zeta_{1}} \int_{\Omega}[\tilde{H}_t(u^{\epsilon})]^2 \ dxdt = &
\ \int_0^{\zeta_{1}} \int_{\Omega} \Delta u^{\epsilon}u^{\epsilon}_{t} \ dxdt\\
     = & \  - \int_0^{\zeta_{1}}  \int_{\Omega} \nabla u^{\epsilon}_{t} \nabla u^{\epsilon} \ dx dt  =  -\frac{1}{2}  \int_0^{\zeta_{1}} \int_{\Omega} \left({\vert \nabla u^{\epsilon} \vert^{2}}\right)_{t} \ dx dt.
\end{align}
Integrate the right-hand side over time, we get
\begin{equation}
2\int_0^{\zeta_{1}} \int_{\Omega}[\tilde{H}_t(u^{\epsilon})]^2 \ dxdt 
+
\int_\Omega|\nabla u^{\epsilon}(x,\zeta_{1})|^2dx =
 \int_\Omega |\nabla g(x)|^2dx \ ; \ 0 < \zeta_{1} \leq T. \label{result-1}
\end{equation}
By \eqref{result-1}, we write following two estimates\ :
\begin{align}
   2 \int_0^{\zeta_{1}} \int_{\Omega}[\tilde{H}_t(u^{\epsilon})]^2 \ dxdt 
    \leq &  \  \int_{\Omega}  |\nabla g(x)|^2 \ dx \label{est-2},\\
  \int_{0}^{\zeta_{2}}  \int_\Omega|\nabla u^{\epsilon}(x,\zeta_{1})|^2  \ dx d\zeta_1 
  \leq & \
  \zeta_{2}  \int_{\Omega}  |\nabla g(x)|^2 \ dx  \ ; \ 0 < \zeta_{2} \leq T \label{est-3}.
\end{align}
By adding  \eqref{est-2} and \eqref{est-3} we get
\begin{multline}
2 \int_0^{\zeta_{1}} \int_{\Omega}[\tilde{H}_t(u^{\epsilon})]^2 \ dxdt 
 +
 \int_0^{\zeta_{2}} \int_\Omega |\nabla u^{\epsilon}(x,\zeta_1)|^2\ dxd \zeta_1
\leq 
(1+{\zeta_{2}})  \int_\Omega |\nabla g(x)|^2 \ dx .  \label{before-frid}
\end{multline}
By Friedrich's inequality \cite{Mazja}, for $u \in W^{1,2}(\Omega)$, one can write
\begin{align}
C_{F}^{-1}   \int_{0}^{\zeta_{2}} \int_{\Omega} [u^{\epsilon}(x,\zeta_1)]^{2} \ dxd\zeta_1 - \zeta_{2} \cdot \epsilon^{2} \int_{\partial\Omega} \vert \psi \vert^{2} \ ds 
  \leq \int_{0}^{\zeta_{2}} \int_{\Omega} \vert \nabla u^{\epsilon}(x,\zeta_1) \vert^{2} \ dxdt.
   \label{F-D-in}
\end{align}
Let $ \epsilon_0$ ; $\frac{1}{2} < \epsilon_{0} < 1 $ be fixed. Then we use \eqref{F-D-in} in \eqref{before-frid} yields
\begin{multline}
2 \int_0^{\zeta_{1}} \int_{\Omega}[\tilde{H}_t(u^{\epsilon})]^2 \ dxdt 
  \ + \
\epsilon_{0}  \int_0^{\zeta_{2}} \int_\Omega |\nabla u^{\epsilon}(x,\zeta_{1})|^2\ dxd\zeta_{1} \\
 \ +  \ {(1-\epsilon_{0})} C_{F}^{-1}
 \int_{0}^{\zeta_{2}} \int_{\Omega} \vert u^{\epsilon}(x,\zeta_1) \vert^{2} \ dxd\zeta_1 \nonumber \\
\leq 
(1+{\zeta_{2}})  \int_\Omega |\nabla g(x)|^2 \ dx  + (1-\epsilon_{0}){\zeta_{2}} \cdot \epsilon^{2} \int_{\partial\Omega} \vert \psi(x) \vert^{2} \ ds,
\end{multline}
where $ \zeta_{2} \in (0,T]$. Setting $\zeta_{2}=T$,
\begin{multline}
 \int_0^{T} \int_{\Omega}[\tilde{H}_t(u^{\epsilon})]^2 \ dxdt 
 +
 \int_0^{T}  \int_\Omega |\nabla u^{\epsilon}(x,t)|^2\ dxdt + 
 \int_{0}^{T} \int_{\Omega} \vert u^{\epsilon}(x,t) \vert^{2} \ dxdt \\
\leq 
C(\Omega,T) \left[\int_\Omega |\nabla g(x)|^2 \ dx 
+
 \epsilon^{2} \int_{\partial\Omega} \vert \psi(x) \vert^{2} \ ds \right], \label{uni-est-2}
\end{multline}
where $ \displaystyle C(\Omega,T) \triangleq  
\max [1+T ,(1-\epsilon_{0})T)]\big / \min[2, \epsilon_{0},(1-\epsilon_{0})C_{F}^{-1}] = [1+T]\big / \min [ \epsilon_{0},(1-\epsilon_{0})C_{F}^{-1}] $. Thus we obtain \eqref{uni-est} by replacing  $\epsilon $ by $K$ on the right-hand side of \eqref{uni-est-2}.
\end{proof}
\begin{remark} \label{grad-u-L2}
Below in the Theorem \ref{grad-G-bound} we will prove that
$u^{\epsilon}$ is uniformly bounded in $L^{2}({I, W^{1,2}(\Omega)})$.
$\displaystyle \tilde{H}_t(u^{\epsilon})$ is uniformly bounded in $L^{2}(\Omega \times (0,T))$. 
\end{remark}
\begin{theorem} \label{grad-G-bound}
Let $u^{\epsilon}$ be a family of strong solutions to problem
\begin{align} 
\text{IBVP$_{\epsilon}$ } =\begin{cases} 
 [H(u^{\epsilon})]_t -   [F(u^{\epsilon})+\epsilon]\Delta u^{\epsilon}   = 0 \  & \text{ in }  \Omega\times (0,T],
\label{ibvp-ep-2} \\
\hspace{3.3 cm} u^{\epsilon}(x,0)   \  = \epsilon    &\mbox{ in } \Omega^{'} \Subset \Omega,   \\
\hspace{3.4 cm} \ u^{\epsilon}(x,t)  =\hspace{0.08 cm} {\epsilon} \psi(x)        & \mbox{ on } \ {\partial \Omega \times (0,T]},
 \end{cases} 
 \end{align}
such that $[H(u^{\epsilon})]_t, \Delta u^{\epsilon} \in L^{2}_{loc}(\Omega \times (0,T])$, satisfying the estimate $ \displaystyle
0 < \epsilon \leq u^{\epsilon} \leq K$. Then
\begin{multline}
     \int_{\Omega \times (0,T)} |\nabla (\theta G(u^{\epsilon}))|^{2} \ dxdt \\
    \leq
    C_4 \bigg[ \int_{\Omega} \theta^{2} |G(u^{\epsilon}(x,0))|^2 \ dx +
     \int_{\Omega \times (0,T)} |\nabla \theta|^{2} (G(u^{\epsilon}))^{2} \ dxdt + K_\epsilon\bigg ], 
    \label{1-star}
\end{multline} 
for every $\theta \in C_{c}^{1}(\Omega)$.
Then  $G(u^{\epsilon})$ is uniformly bounded in $L^{2}(I, W^{1,2}_{loc}(\Omega)) $ for each $\epsilon > 0.$ 
\end{theorem}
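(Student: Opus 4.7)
The plan is to test the equation in IBVP$_\epsilon$ with $\theta^2$ (a time-independent spatial cutoff) in the spirit of the proof of Theorem \ref{Main-T}, integrate over $\Omega\times(0,T)$, integrate by parts in space using the compact support of $\theta$, and then invoke Lemma \ref{lemma-1} to convert the diffusion term into the desired $|\nabla(\theta G(u^\epsilon))|^2$ form. The new wrinkle compared to the localization argument is the uniformly elliptic perturbation $\epsilon\Delta u^\epsilon$, which produces an extra cross term that must be absorbed into $K_\epsilon$ using the a priori bounds from Theorem \ref{grad-u-bound}.

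More concretely, first I would multiply $[H(u^\epsilon)]_t-(F(u^\epsilon)+\epsilon)\Delta u^\epsilon=0$ by $\theta^2$ and integrate, using the $t$-independence of $\theta$ to write the time integral of $\theta^2[H(u^\epsilon)]_t$ as $\int_\Omega\theta^2[H(u^\epsilon(T))-H(u^\epsilon(0))]dx$, and integration by parts (valid since $\theta\in C_c^1(\Omega)$ and $u^\epsilon$ is a strong solution with $\Delta u^\epsilon\in L^2_{\mathrm{loc}}$) to rewrite the diffusion term as
\begin{equation*}
\int_0^T\!\!\int_\Omega \nabla u^\epsilon\cdot\nabla\bigl[\theta^2(F(u^\epsilon)+\epsilon)\bigr]\,dx\,dt
=\int_0^T\!\!\int_\Omega \nabla u^\epsilon\cdot\nabla[\theta^2 F(u^\epsilon)]\,dx\,dt
+2\epsilon\!\int_0^T\!\!\int_\Omega \theta\,\nabla u^\epsilon\cdot\nabla\theta\,dx\,dt.
\end{equation*}
The first summand is bounded below by $\frac12\int_0^T\!\!\int_\Omega|\nabla(\theta G(u^\epsilon))|^2-(2C_1^2+1)\int_0^T\!\!\int_\Omega G^2(u^\epsilon)|\nabla\theta|^2$ via Lemma \ref{lemma-1}, which yields precisely the two leading terms on the right-hand side of \eqref{1-star}. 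The second summand is controlled by Cauchy's inequality through $\epsilon\int_0^T\!\!\int_\Omega|\nabla u^\epsilon|^2+\epsilon\int_0^T\!\!\int_\Omega\theta^2|\nabla\theta|^2$, where the first integral is uniformly bounded (in $\epsilon$) by Theorem \ref{grad-u-bound} and the second is a finite constant depending only on $\theta$ and $T$.

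Assembling these pieces and discarding the nonnegative terminal term $\int_\Omega\theta^2 H(u^\epsilon(T))dx$, I obtain the asserted estimate with
$$K_\epsilon\;=\;\int_\Omega\theta^2 H(u^\epsilon(x,0))\,dx\;+\;\epsilon\!\int_0^T\!\!\int_\Omega|\nabla u^\epsilon|^2\,dx\,dt\;+\;\epsilon\!\int_0^T\!\!\int_\Omega\theta^2|\nabla\theta|^2\,dx\,dt,$$
which is a finite $\epsilon$-dependent constant (the initial data is $u^\epsilon(x,0)=\epsilon$, and the $|\nabla u^\epsilon|^2$ integral is $\epsilon$-uniformly bounded by Theorem \ref{grad-u-bound}). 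For the final claim, given $K\Subset\Omega$ pick $\theta\in C_c^1(\Omega)$ with $\theta\equiv 1$ on $K$; since $0\le u^\epsilon\le K$ implies $G(u^\epsilon)\le G(K)$, both $\int\theta^2|G(u^\epsilon(0))|^2$ and $\int|\nabla\theta|^2 G^2(u^\epsilon)$ are finite. Using the pointwise identity $|\nabla(\theta G(u^\epsilon))|^2\ge\tfrac12\theta^2|\nabla G(u^\epsilon)|^2-G^2(u^\epsilon)|\nabla\theta|^2$, one then recovers $\int_K|\nabla G(u^\epsilon)|^2\,dx\,dt\le C(\epsilon,K,\theta)$, giving uniform boundedness of $G(u^\epsilon)$ in $L^2(I,W^{1,2}_{\mathrm{loc}}(\Omega))$ for each fixed $\epsilon>0$.

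The main obstacle is checking that the $\epsilon$-perturbation does not dominate the structural bound coming from Lemma \ref{lemma-1}; the resolution is to isolate it algebraically and route it through the global energy identity of Theorem \ref{grad-u-bound}, which gives an $\epsilon$-uniform control on $\|\nabla u^\epsilon\|_{L^2}$ so that the resulting contribution can be absorbed cleanly into the constant $K_\epsilon$. A secondary subtlety is that the initial datum term appears as $H(u^\epsilon(0))$ rather than $G^2(u^\epsilon(0))$ in the natural estimate; this is inconsequential here because $u^\epsilon(x,0)=\epsilon$ is constant, so both expressions are absolute constants that can be lumped either into the first summand of the right-hand side of \eqref{1-star} or into $K_\epsilon$.
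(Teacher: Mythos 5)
Your proposal is correct and follows essentially the same route as the paper: test the regularized equation with the spatial cutoff $\theta^2$, integrate by parts, split off the $\epsilon\Delta u^\epsilon$ perturbation and absorb it (via Cauchy and the a priori gradient bound of Theorem \ref{grad-u-bound}) into $K_\epsilon$, and treat the initial-data term $\int_\Omega\theta^2 H(u^\epsilon(x,0))\,dx$ as a constant. The only difference is cosmetic: you invoke Lemma \ref{lemma-1} for the structural term, whereas the paper re-derives the same Cauchy-type pointwise estimates inline (with parameters $\epsilon_1,\epsilon_2$ and \ref{A-1}) before assembling $C_4$.
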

\begin{proof}
Multiply both sides of the equality in  \eqref{ibvp-ep-2}  by $\theta^{2}$, and integrate over $\Omega \times (0,T]$:
\begin{multline}
\int_{\Omega}[H[u^{\epsilon}(x,T)] - H[u^{\epsilon}(x,0)] \theta^{2} \ dx \\ 
 =   - \int_{0}^{t}\int_{\Omega} [ \nabla u^{\epsilon}] ^{2} F^{'}(u^{\epsilon}) \theta^{2} 
- [F(u^{\epsilon})+\epsilon] \nabla u^{\epsilon} \cdot \nabla (\theta^{2}) \ dxdt 
\label{E-5}
\end{multline}
Rearrange above to the following inequality:
\begin{multline}
\int_{\Omega}[H(u^{\epsilon}(x,0)] \theta^2 \ dx   
+ 
\epsilon\int_{0}^{t}\int_{\Omega}  |\nabla u_{\epsilon} \nabla (\theta^{2})| \ dxdt  
\\
 \geq    \int_{0}^{t}\int_{\Omega} [ \nabla u^{\epsilon}] ^{2} F^{'}(u^{\epsilon}) \theta^{2}  \ dxdt 
-  \int_{0}^{t}\int_{\Omega} \vert F(u^{\epsilon}) \nabla u^{\epsilon} \cdot \nabla (\theta^{2}) \vert \ dxdt.  \label{E-5}
\end{multline}
Next, by \ref{F-def}, we get \  $[ \nabla u^{\epsilon}] ^{2} F^{'}(u^{\epsilon}) \cdot \theta^{2} 
 = \ [\nabla G (u^{\epsilon})]^2 \theta^2 $. Applying Cauchy's Inequality 
\begin{align}
 [\nabla G (u^{\epsilon})]^2 \theta^2 
  \geq \ &  [ \ \vert \nabla (\theta G(u^{\epsilon}) \vert - \vert G(u^{\epsilon}) \nabla \theta \vert \ ]^{2}\\
  \geq \ & 
 (1-2 \epsilon_1) \vert \nabla (\theta G(u^{\epsilon})) \vert^{2}
-
\left(\frac{1}{2\epsilon_1}  - 1\right)\vert G(u^{\epsilon})\nabla \theta\vert^{2}.
 \label{E-7}
\end{align}
for some fixed  $ 0 < \epsilon_1 < \frac{1}{2}$, and 
\begin{align}
    \epsilon\int_{0}^{t}\int_{\Omega}  |\nabla u_{\epsilon} \nabla (\theta^{2})| \ dxdt    
     \leq
    \epsilon \left[\int_{0}^{t}\int_{\Omega} |\nabla u_{\epsilon}|^{2} \ dxdt + 2 \int_{0}^{t}\int_{\Omega} |\nabla (\theta)|^{2} \ dxdt\right] \leq K_2({\epsilon})
\end{align}
Next, using \ref{A-1}  we compute
\begin{align}
\vert F(u^{\epsilon}) \nabla u^{\epsilon} \cdot \nabla (\theta^{2}) \vert
\leq \ &
C_1 \vert  G ^{'}(u^{\epsilon}) G (u^{\epsilon}) \ \nabla u^{\epsilon} \cdot \nabla (\theta^{2}) \vert
 = \ 
 \frac{C_1}{2}\vert \nabla ( G^2 (u^{\epsilon}) \cdot \nabla (\theta^{2})  \vert .  \label{E-8}
\end{align}
Combining \eqref{E-7} and \eqref{E-8} in \eqref{E-5} becomes 
\begin{multline}
\int_{\Omega} H[u^{\epsilon}(x,0)] \cdot \theta^{2} \  dx 
+
 K_2({\epsilon})   
+
\left(\frac{1}{2\epsilon_1}-1\right) \int_{0}^{t}\int_{\Omega}  \vert G(u^{\epsilon})\nabla \theta\vert^{2} \ dxdt 
\\  \geq 
(1-2 \epsilon_1)  \int_{0}^{t}\int_{\Omega}  \vert \nabla (\theta G(u^{\epsilon})) \vert^{2} \ dxdt 
 -  \frac{C_1}{2}  \int_{0}^{t}\int_{\Omega}\vert \nabla ( G^2 (u^{\epsilon})) \cdot \nabla (\theta^{2})  \vert \ dxdt. \label{E-8.5}
\end{multline}
Once again applying Cauchy's Inequality,  We compute the following
\begin{align}\label{ineq-1}
  (1-2 \epsilon_1)  \vert \nabla  &\ (\theta G(u^{\epsilon})) \vert^{2} 
 -  \frac{C_1}{2}  \vert \nabla ( G^2 (u^{\epsilon})) \cdot \nabla (\theta^{2})  \vert \nonumber \\
 & = (1-2 \epsilon_1)\left[    
 |\theta \nabla G(u^\epsilon)|^{2} + |G(u^\epsilon) \nabla \theta |^{2}  
 \right] 
 -
  2(1-2 \epsilon_1 + C_1) | G \nabla G(u^{\epsilon}) \theta  \nabla \theta|\nonumber\\
  & \geq
  \big[1-2 \epsilon_1 - 2\epsilon_2(1-2 \epsilon_1 + C_1) \big]|\theta \nabla G(u^\epsilon)|^{2} 
  -
   \bigg[\frac{1}{2{\epsilon_2}}(1 + C_1)+2 \epsilon_1 \bigg] |G(u^\epsilon) \nabla \theta |^{2}, 
\end{align}
where $ [1-2\epsilon_1] \big / [2(1-2\epsilon_1+C_1] > \epsilon_2$. 
Using \eqref{ineq-1} in \eqref{E-8.5}, we obtain
\begin{multline}
\int_{\Omega} H[u^{\epsilon}(x,0)] \cdot \theta^{2} \  dx 
+
K_2({\epsilon})
+
 \bigg[\frac{1}{2\epsilon_1} + \frac{1}{2{\epsilon_2}}(1 + C_1)+2 \epsilon_1 \bigg]\int_{0}^{t}\int_{\Omega}  \vert G(u^{\epsilon})\nabla \theta\vert^{2} \ dxdt 
\\  \geq 
 \int_{0}^{t}\int_{\Omega} \bigg[1-2 \epsilon_1 - 2\epsilon_2(1-2 \epsilon_1 + C_1) \bigg]|\theta \nabla G(u^\epsilon)|^{2} 
\end{multline}
Note that $H(u^{\epsilon})$ is bounded for $0 <u^\epsilon < K$ . By selecting 
\begin{align}
C_4 = {\max \bigg[1,  \bigg[\frac{1}{2\epsilon_1} + \frac{1}{2{\epsilon_2}}(1 + C_1)+2 \epsilon_1 \bigg]  \bigg]} \bigg/ {\bigg[1-2 \epsilon_1 - 2\epsilon_2(1-2 \epsilon_1 + C_1) \bigg]},
\end{align} gives  \eqref{1-star}.
\end{proof}
\begin{proposition}\label{H-bound-1}
\normalfont
Assume all conditions in Proposition \ref{P-1} and  Proposition \ref{P-2} hold. If \[ \displaystyle
\liminf_{s\rightarrow 0} {P(s)\left[\int_{s}^{M} \frac{1}{\sigma P(\sigma)} \ d\sigma \right]^{\frac1\Lambda}} > 0 \ , \]
then 
$ \displaystyle \sup\limits_{\epsilon>0} \left[\int_{0}^{t}\int_{\Omega}\theta|\nabla H(u^{\epsilon})|^2 \ dxdt \right] \leq C_\theta<\infty$, for every $\theta\in C^1_c(\Omega)$. 
\end{proposition}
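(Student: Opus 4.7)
The plan is to reduce the statement to the pointwise inequality $h^2(s) \leq C F'(s)$ on $(0,M]$ and then invoke Theorem~\ref{grad-G-bound}. Indeed $\nabla H(u^\epsilon) = h(u^\epsilon)\nabla u^\epsilon$ and $|\nabla G(u^\epsilon)|^2 = F'(u^\epsilon)|\nabla u^\epsilon|^2$, so such a comparison yields the pointwise estimate $|\nabla H(u^\epsilon)|^2 \leq C|\nabla G(u^\epsilon)|^2$. Integrating against the given $\theta$ and applying Theorem~\ref{grad-G-bound} with an auxiliary cutoff $\vartheta \in C^1_c(\Omega)$ satisfying $\vartheta^2 \geq |\theta|$ then delivers the $\epsilon$-uniform bound, since the right-hand side of \eqref{1-star} is itself controlled by the $L^\infty$ bound $u^\epsilon \leq K$ together with Theorem~\ref{grad-u-bound}.

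To produce the pointwise inequality I would first extract the lower bound $P(s) \geq c_0 H(s)$ on $(0,M]$ from the liminf hypothesis. Using \eqref{H-choice} in the form $H(s) = \Lambda^{-1/\Lambda} I(s)^{-1/\Lambda}$, and noting that the truncated integral $\int_s^M \frac{d\sigma}{\sigma P(\sigma)}$ differs from $I(s)$ only by the bounded constant $\int_M^\infty \frac{d\sigma}{\sigma P(\sigma)}$, the hypothesis translates into $P/H \geq c_0 > 0$ near $s = 0$, and continuity plus positivity on the complementary compact set extend the bound to all of $(0,M]$. Next, combining with Proposition~\ref{P-2}'s assumption $\sup_s P(s) I(s) = A < (\Lambda+1)/\Lambda$---equivalently $P(s) \leq A\Lambda H^\Lambda(s)$---I would deduce the complementary inequality $(\Lambda+1)H^\Lambda(s) - P(s) \geq c_1 H^\Lambda(s)$ with $c_1 = \Lambda + 1 - A\Lambda > 0$.

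The algebraic heart of the argument is then the direct computation, obtained by differentiating \eqref{F-result} and squaring \eqref{h-ret},
$$F'(s) = \frac{H^{\Lambda+1}(s)}{s^2 P(s)}\bigl[(\Lambda+1)H^\Lambda(s) - P(s)\bigr], \qquad h^2(s) = \frac{H^{2\Lambda+2}(s)}{s^2 P^2(s)}.$$
Forming the ratio and inserting the two lower bounds from the previous paragraph yields
$$\frac{h^2(s)}{F'(s)} = \frac{H^{\Lambda+1}(s)}{P(s)\bigl[(\Lambda+1)H^\Lambda(s) - P(s)\bigr]} \leq \frac{H^{\Lambda+1}(s)}{c_0 H(s) \cdot c_1 H^\Lambda(s)} = \frac{1}{c_0 c_1},$$
uniformly on $(0,M]$. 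The main obstacle is ensuring that the two hypotheses give compatible control of the denominator: Proposition~\ref{P-2} bounds $P$ from above in units of $H^\Lambda$, while the liminf condition bounds $P$ from below in units of $H$, and it is precisely the factorization of $P\bigl[(\Lambda+1)H^\Lambda - P\bigr]$ into two factors matching these respective scales that forces a genuine constant upper bound on $h^2/F'$, independent of the sign of $\Lambda - 1$ (so the argument works uniformly in $\lambda \in (0,2)$).
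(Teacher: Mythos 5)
Your argument is correct and is essentially the paper's own proof: the reduction to a uniform bound on $h^{2}(s)/F'(s)=\bigl|H'(s)/G'(s)\bigr|^{2}$ followed by an appeal to Theorem \ref{grad-G-bound}, with the ratio computed explicitly from \eqref{h-ret} and \eqref{F-result} and the denominator controlled by the liminf hypothesis together with $A<(\Lambda+1)/\Lambda$ from Proposition \ref{P-2}. The only difference is cosmetic bookkeeping (you write the factorization in terms of $H^{\Lambda}=1/(\Lambda I)$ instead of $I$, and spell out the compactness step away from $s=0$), so nothing further is needed.
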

\begin{proof}
Note that
$ \displaystyle
    |\nabla H(u^{\epsilon})| 
    \leq
    \left\vert \frac{H'(u^{\epsilon})}{G'(u^{\epsilon})} \right \vert |\nabla G(u^{\epsilon})|
$. By Theorem \ref{grad-G-bound}, it suffices to prove that $\displaystyle \sup_{0<s<K} \left\vert \frac{H'(s)}{G'(s)} \right\vert <\infty.
$ For this end, it is enough to verify
$ \displaystyle    \limsup_{s \rightarrow 0} \left\vert \frac{H'(s)}{G'(s)} \right\vert < \infty
$. By remark \ref{new-def}
\begin{align}
F(s)     = &  \Lambda^{-\frac{1}{\Lambda}-1}s^{-1}(I(s))^{-\frac{1}{\Lambda}-1},\\
F^{'}(s) = & B_{1} s^{-2} {(I(s))^{-\frac{1}{\Lambda}-2}}({P(s)})^{-1}\left(\frac{1+\Lambda}{\Lambda} -P(s)I(s)\right) \ ; \ B_{1}\triangleq   \Lambda^{-\frac{1}{\Lambda}-1}.
\end{align}
By \ref{F-def} in Definition \ref{G-def}
\begin{align}
    {G^{'}(s)} = & \sqrt{B_{1}} s^{-1} {(I(s))^{-\frac{1}{2\Lambda}-1}}({P(s)})^{-\frac{1}{2}}\sqrt{\frac{1+\Lambda}{\Lambda} - P(s)I(s)}  \label{F-deri-I}.
\end{align}
We write $H(s)$ in term of $I(s)$ in \eqref{H-choice}
\begin{align}
    H(s)    = & (\Lambda I(s))^{-\frac{1}{\Lambda}},\\
    H^{'}(s)= & B_{1} {(I(s))^{-\frac{1}{\Lambda}-1}}s^{-1} (P(s))^{-1}. \label{H-deri-I}
\end{align}
Applying  \eqref{H-deri-I} and \eqref{F-deri-I} we get
\begin{align}
    \left\vert \frac{H'(s)}{G'(s)} \right\vert 
    =\frac{1}{\sqrt{B_{1}}} \frac{1}{\sqrt{(I(s))^{\frac{1}{\Lambda}}P(s)}}\frac{1}{\sqrt{\frac{1+\Lambda}{\Lambda}-I(s)P(s)}} \label{ratio-1}.
\end{align}
By Proposition \ref{P-2} we have 
$ \displaystyle \frac{1+\Lambda}{\Lambda} > I(s)P(s) > 0 \ ; \ s\in [0,M) $. Thus, by \eqref{ratio-1}, it follows that  $ \displaystyle
    \limsup_{s\rightarrow 0} \left\vert \frac{H'(s)}{G'(s)} \right\vert 
    < \infty$, 
whenever $ \displaystyle
\lim_{s\rightarrow 0} \inf{P(s) (I(s))^{\frac1\Lambda}} > 0$ \ . 
\end{proof}

\begin{theorem}
Let $u^{\epsilon}$  satisfies Theorem \ref{grad-G-bound}.  Then   $u^{\epsilon}$ holds the following uniform estimates
\begin{enumerate} [label={\normalfont(H\arabic*)}]
  \item 
$ \displaystyle
\int_{\Omega' \times I}  \vert   \Psi {H}_t(u^{\epsilon})  \vert  \ dxdt   
\leq C (\Omega',K)\left[\vert\vert \Psi   \vert\vert_{L^{\infty}(\Omega\times I)}
+
\vert\vert   \nabla \Psi   \vert\vert_{L^2(\Omega\times I)}
\right], \label{L1-bound}
$
\item $  \displaystyle
\int_{\Omega' \times I}|\nabla H(u^{\epsilon})|^2\ dxdt
\leq C(\Omega',K),
$
\end{enumerate}
 on $\Omega^{'} \times {I}$, for each $\Psi\in C^1_c(\Omega^{'} \times {I})$  and for any $\Omega'\Subset\Omega$.
\end{theorem}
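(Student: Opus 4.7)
The plan is to deduce both estimates directly from the strong form of the regularised PDE
\[
[H(u^\epsilon)]_t = (F(u^\epsilon)+\epsilon)\,\Delta u^\epsilon,
\]
combined with the uniform bounds already collected in Theorem \ref{grad-u-bound} (on $\nabla u^\epsilon$ in $L^2$) and Theorem \ref{grad-G-bound} (on $\nabla G(u^\epsilon)$ in $L^2_{loc}$), both independent of $\epsilon$.

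For (H2), I would invoke Proposition \ref{H-bound-1}: picking $\theta \in C^1_c(\Omega)$ with $\theta \equiv 1$ on $\Omega'$, the proposition yields $\int_{\Omega\times I}\theta|\nabla H(u^\epsilon)|^2\,dx dt \le C_\theta$ uniformly in $\epsilon$, and dropping $\theta$ on $\Omega'$ gives (H2). Unpacking that argument, the mechanism is that $|\nabla H(u^\epsilon)|\le |H'/G'|(u^\epsilon)\,|\nabla G(u^\epsilon)|$ with $\sup_{0<s\le K}|H'/G'|<\infty$ under the structural hypothesis $\liminf_{s\to 0} P(s)I(s)^{1/\Lambda}>0$, so (H2) reduces to the $L^2_{loc}$ bound on $\nabla G(u^\epsilon)$ already available from Theorem \ref{grad-G-bound}.

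For (H1), I would multiply the equation by $\Psi \in C^1_c(\Omega'\times I)$, integrate over $\Omega'\times I$, and integrate by parts in space, using $F'(u^\epsilon)|\nabla u^\epsilon|^2 = |\nabla G(u^\epsilon)|^2$ from Definition \ref{G-def}:
\[
\int_{\Omega'\times I}\Psi\, H_t(u^\epsilon)\,dx dt
= -\int (F(u^\epsilon)+\epsilon)\,\nabla \Psi\cdot\nabla u^\epsilon\,dx dt \;-\; \int \Psi\,|\nabla G(u^\epsilon)|^2\,dx dt.
\]
Cauchy-Schwarz together with monotonicity of $F$ on $[0,K]$ bounds the first integral by $(F(K)+\epsilon)\,\|\nabla \Psi\|_{L^2(\Omega\times I)}\,\|\nabla u^\epsilon\|_{L^2(\Omega'\times I)}$, and the second by $\|\Psi\|_{L^\infty(\Omega\times I)}\int_{\Omega'\times I}|\nabla G(u^\epsilon)|^2$. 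Theorems \ref{grad-u-bound} and \ref{grad-G-bound} render both factors uniformly bounded in $\epsilon$, and one arrives at the asserted inequality with $C=C(\Omega',K)$.

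The delicate point is that (H1) carries the absolute value \emph{inside} the integral, whereas integration by parts only directly controls the signed quantity $\bigl|\int \Psi H_t(u^\epsilon)\bigr|$. To reconcile this with the literal statement one uses the divergence form
\[
H_t(u^\epsilon)=\mathrm{div}\bigl(\nabla \Phi(u^\epsilon)+\epsilon\nabla u^\epsilon\bigr)-|\nabla G(u^\epsilon)|^2,\qquad \Phi'(s)=F(s),
\]
which exhibits $H_t(u^\epsilon)$ as a uniformly bounded element of $W^{-1,2}+L^1$, i.e.\ of the dual of $L^\infty\cap W^{1,2}_0$, which is the topology in which (H1) is phrased. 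The genuine obstacle is keeping the $\epsilon\,\Delta u^\epsilon$ contribution under control as $\epsilon\to 0$, and this is precisely what the $\epsilon$-independent energy identity \eqref{energy-id} of Theorem \ref{grad-u-bound} secures.
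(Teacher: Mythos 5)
Your proposal is correct in substance and follows essentially the same route as the paper for (H1): test the equation with $\Psi$, integrate by parts in space, use $F'(u^\epsilon)|\nabla u^\epsilon|^2=|\nabla G(u^\epsilon)|^2$, and close with the uniform $L^2_{loc}$ bound on $\nabla G(u^\epsilon)$ from Theorem \ref{grad-G-bound}. The differences are minor bookkeeping: for the flux term you use Cauchy--Schwarz with $F(K)$ and the global $L^2$ bound on $\nabla u^\epsilon$ from Theorem \ref{grad-u-bound}, whereas the paper invokes Assumption (A-1) to rewrite $F(u^\epsilon)\nabla u^\epsilon\cdot\nabla\Psi$ as $\tfrac12\nabla G^2(u^\epsilon)\cdot\nabla\Psi$ and bounds $\int|\nabla G^2(u^\epsilon)|$ using the same $L^2$ control of $\nabla G(u^\epsilon)$; both closings are legitimate. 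For (H2) the paper's written proof is silent, and your appeal to Proposition \ref{H-bound-1} (with a cutoff $\theta\equiv1$ on $\Omega'$, and hence under its extra hypothesis $\liminf_{s\to0}P(s)I(s)^{1/\Lambda}>0$) is exactly the mechanism the paper has available, so you are filling a gap the paper leaves implicit. Your caveat about the absolute value sitting inside the integral in (H1) is well taken: the paper's own first step simply moves the modulus inside after integration by parts, which is not justified as written; what the argument genuinely yields is a bound on the signed pairing, i.e.\ a uniform bound on $H_t(u^\epsilon)$ in the dual of $L^\infty\cap W^{1,2}_0$ locally, via the decomposition $H_t(u^\epsilon)=\operatorname{div}(\nabla\Phi(u^\epsilon)+\epsilon\nabla u^\epsilon)-|\nabla G(u^\epsilon)|^2$ with $\Phi'=F$. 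Your dual-norm reading is therefore the defensible interpretation of \ref{L1-bound}, and is in any case what the subsequent compactness argument (Corollary \ref{proposition on compactness}, via Theorem \ref{compact}) actually requires.
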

\begin{proof}
Multiply both sides of the inequality in  \eqref{ibvp-ep}  by $ \Psi$, and integrate over $ \Omega'_{I} \triangleq \Omega^{'} \times {I}$:
\begin{multline}
 \int_{\Omega'_{I}} \vert \Psi {H}_t(u^{\epsilon}) \vert \ dxdt   
 \leq  
\int_{\Omega'_{I}} \left\vert \nabla u^{\epsilon} \cdot \nabla [F( u^{\epsilon})\Psi]\right\vert \ dxdt \\
 \leq  
 \int_{\Omega'_{I}} \left\vert F(u^{\epsilon})
 \nabla u^{\epsilon}\cdot\nabla \Psi \right\vert
 + 
 \vert \Psi F^{'}( u^{\epsilon}) (\nabla u^{\epsilon})^{2}
 \vert \ dxdt \label{z-1}.
\end{multline}
Using \ref{A-1} on the right-hand side of \eqref{z-1}, we write
\begin{align}
 \int_{\Omega'_{I}} \vert \Psi {H}_t(u^{\epsilon}) \vert \ dxdt    & \leq 
C_{2}\int_{\Omega'_{I}} \vert G(u^{\epsilon}) G^{'}(u^{\epsilon})
   \nabla u^{\epsilon}\cdot\nabla \Psi \vert
   \ dxdt
   + 
\int_{\Omega'_{I}} \vert \Psi (G^{'}( u^{\epsilon}))^2 (\nabla u^{\epsilon})^{2} \vert \ dxdt  \nonumber
\\
& \leq 
   \frac{C_{2}}{2}\int_{\Omega'_{I}} \vert \nabla G^2(u^{\epsilon}) \vert\vert \nabla \Psi \vert  \ dxdt
   + 
   \vert\vert \Psi \vert \vert_{L^{\infty}(\Omega^{'}_{I})}\int_{\Omega'_{I}}\vert \nabla G(u^{\epsilon})\vert^2
   \ dxdt. \label{z-2}
\end{align}
Rearrange the right-hand side of \eqref{z-2}, we get
\begin{align}
    \leq  
\max \left [\frac{C_{2}}{2},1\right]\left[\vert \vert \nabla \Psi \vert \vert_{L^{2}(\Omega^{'}_{I})} + \vert\vert \Psi \vert \vert_{L^{\infty}(\Omega^{'}_{I})} \right] \cdot
\left[\int_{\Omega'_{I}}  \vert \nabla G^2(u^{\epsilon})\vert   \ dxdt 
   + 
 \int_{\Omega'_{I}} \vert \nabla G(u^{\epsilon})\vert^2
   \ dxdt \right]. \nonumber
\end{align}
Note that $0 < \epsilon \leq u^{\epsilon} \leq K $, and $G(u^{\epsilon})$ is uniformly bounded in $L^{2}(I, W^{1,2}_{loc}(\Omega)) $ by Theorem \ref{grad-G-bound}. Thus we obtain {\ref{L1-bound}}. \qedhere
\end{proof}
From above theorem follows
\begin{corollary}\label{proposition on compactness}
If we obtain an estimate analogous to \eqref{1-star}, replacing $\nabla G(u^{\epsilon})$ with $\nabla H(u^{\epsilon})$,  then we can apply Theorem \eqref{compact} to conclude compactness of $\{ H(u^{\epsilon})\}$ in $L^{2} (I,L^{q}_{loc}(\Omega))$ with $ q <\frac{2N}{N-2} $. Since $H:\mathbb{R}\to \mathbb{R}$ is a homeomorphism, and $u_\epsilon$ is uniformly bounded, it follows that  
$\{u^{\epsilon}\}$ is compact in $L^{q}_{loc} (\Omega\times I)$, $q\geq1$.
\end{corollary}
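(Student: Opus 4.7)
The plan is to apply Theorem~\ref{compact} (Simon's compactness criterion) to the family $\{H(u^{\epsilon})\}$ and then transfer the compactness to $\{u^{\epsilon}\}$ via the homeomorphism $H$. Fix $\Omega'\Subset\Omega$ and $I=(0,T)$. I would choose the chain of Banach spaces $B_0 = W^{1,2}(\Omega')$, $B_1 = L^q(\Omega')$ for $q<\tfrac{2N}{N-2}$, and $B_2 = W^{-1,2}(\Omega')$, i.e.\ the dual of $W^{1,2}_0(\Omega')$. The compact embedding $B_0\Subset B_1$ is Rellich--Kondrachov, and $B_1\subset B_2$ follows from the canonical $L^2$-duality pairing (choosing $q\geq 2$ if necessary).

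Next, I would verify the two hypotheses of Theorem~\ref{compact}. The bound $\sup_\epsilon \|H(u^{\epsilon})\|_{L^2(I,B_0)}<\infty$ is precisely the assumed analogue of~\eqref{1-star} with $\nabla H$ in place of $\nabla G$, combined with the pointwise bound $0\leq H(u^{\epsilon})\leq H(K)$ inherited from $u^{\epsilon}\leq K$. The bound $\sup_\epsilon \|\partial_t H(u^{\epsilon})\|_{L^1(I,B_2)}<\infty$ follows from (H1): for any $\Psi(x,t)$ supported in $\Omega'\times I$ and controlled in the appropriate dual norm, the estimate in (H1) controls the duality pairing against $\partial_t H(u^{\epsilon})$ by $\|\nabla\Psi\|_{L^2}+\|\Psi\|_{L^\infty}$ times a constant, which, after a standard Fubini/slicing argument, yields the $L^1(I,B_2)$-norm bound.

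With both hypotheses verified, $\{H(u^{\epsilon})\}$ is relatively compact in $L^2(I, L^q_{loc}(\Omega))$. Extract a subsequence $\epsilon_k\to 0$ with $H(u^{\epsilon_k})\to v$ in $L^2(I, L^q_{loc}(\Omega))$ and, along a further subsequence, almost everywhere on $\Omega\times I$. Since $H$ is continuous and strictly increasing on $[0,K]$ with $H(0)=0$, it is a homeomorphism onto $[0,H(K)]$, so $u^{\epsilon_k}=H^{-1}(H(u^{\epsilon_k}))\to H^{-1}(v)\eqdef u$ almost everywhere. The uniform bound $0\leq u^{\epsilon_k}\leq K$ combined with the dominated convergence theorem then upgrades this pointwise convergence to $u^{\epsilon_k}\to u$ in $L^q_{loc}(\Omega\times I)$ for every $q\geq 1$.

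The main obstacle I anticipate is the careful identification of the target dual space $B_2$ in which $\partial_t H(u^{\epsilon})$ is uniformly bounded: the test-function norm $\|\nabla\Psi\|_{L^2}+\|\Psi\|_{L^\infty}$ appearing in (H1) is slightly stronger than the native $W^{1,2}_0$-norm (the $L^\infty$ piece is genuinely needed only when $N\geq 2$). The clean fix is to replace $B_2$ by the larger space $(W^{1,2}_0(\Omega')\cap L^\infty(\Omega'))^*$, which still satisfies $B_1\subset B_2$ continuously and does not affect the applicability of Theorem~\ref{compact}; a minor verification that the chosen $q$ is compatible with this enlargement is the only technical point.
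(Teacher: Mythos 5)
Your proposal is correct and follows essentially the same route the paper intends: the corollary is stated there without a detailed argument (it ``follows from the above theorem''), namely combine the uniform bounds on $\nabla H(u^{\epsilon})$ and on $\Psi\,[H(u^{\epsilon})]_t$ with the Aubin--Lions--Simon criterion of Theorem~\ref{compact} and then transfer compactness to $u^{\epsilon}$ through the homeomorphism $H$ and the uniform bound $0\le u^{\epsilon}\le K$. Your explicit choices $B_0=W^{1,2}(\Omega')$, $B_1=L^{q}(\Omega')$, $B_2=(W^{1,2}_0(\Omega')\cap L^{\infty}(\Omega'))^{*}$ and the dominated-convergence step are the natural way to fill in the details; note only that, since the bound \ref{L1-bound} has the absolute value inside the integral, taking $\Psi$ to be a fixed spatial cutoff already gives a uniform $L^{1}$ bound on $[H(u^{\epsilon})]_t$ locally, which makes your slicing/duality step immediate.
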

\end{section}

\bibliography{NEW-5}
 \bibliographystyle{abbrv}

\end{document}